\documentclass[12pt]{article}

\usepackage{amsmath}
\usepackage{amssymb}
\usepackage{amsthm}
\usepackage{bbm}
\usepackage{graphicx}
\usepackage{color}
\usepackage{epsfig}

\newtheorem{theorem}{Theorem}

\newtheorem{prop}[theorem]{Proposition}
\newtheorem{lemma}[theorem]{Lemma}
\newtheorem{cor}[theorem]{Corollary}

\newtheorem{question}{Question}

\newtheorem{conj}[question]{Conjecture}

\newcommand{\wt}{\ensuremath{\textrm{wt}}}

\newcommand{\floor}[1]{\ensuremath{\left \lfloor {#1} \right \rfloor}}

\newcommand{\ang}[1]{\ensuremath{\left \langle {#1} \right \rangle}}
\newcommand{\N}{{\mathbb N}}
\newcommand{\Z}{{\mathbb Z}}

\newcommand{\R}{{\mathbb R}}

\newcommand{\1}{{\mathbbmss 1}}

\newcommand{\be}{{\bf e}}
\newcommand{\Bf}{{\bf f}}

\newcommand{\bE}{{\bf E}}
\newcommand{\bG}{{\bf G}}

\newcommand{\bu}{{\bf u}}
\newcommand{\bP}{{\bf P}}

\newcommand{\br}{{\bf r}}
\newcommand{\bv}{{\bf v}}

\newcommand{\bx}{{\bf x}}

\newcommand{\cE}{{\mathcal E}}

\newcommand{\cG}{{\mathcal G}}
\newcommand{\cH}{{\mathcal H}}

\newcommand{\cL}{{\mathcal L}}

\newcommand{\cS}{{\mathcal S}}

\newcommand{\tr}{\textrm{tr}}

\DeclareMathOperator{\supp}{supp}

\DeclareMathOperator{\vol}{vol}
\DeclareMathOperator{\diag}{diag}
\DeclareMathOperator{\rank}{rank}
\DeclareMathOperator{\nl}{null}

\allowdisplaybreaks

\title{Deducing Vertex Weights from Empirical Occupation Times}
\author{Joshua Cooper}

\begin{document}

\maketitle

\begin{abstract} We consider the following problem arising from the study of human problem solving: Let $G$ be a vertex-weighted graph with marked ``in'' and ``out'' vertices.  Suppose a random walker begins at the in-vertex, steps to neighbors of vertices with probability proportional to their weights, and stops upon reaching the out-vertex.  Could one deduce the weights from the paths that many such walkers take?  We analyze an iterative numerical solution to this reconstruction problem, in particular, given the empirical mean occupation times of the walkers.  In the process, a result concerning the differentiation of a matrix pseudoinverse is given, which may be of independent interest.  We then consider the existence of a choice of weights for the given occupation times, formulating a natural conjecture to the effect that -- barring obvious obstructions -- a solution always exists.  It is shown that the conjecture holds for a class of graphs that includes all trees and complete graphs.  Several open problems are discussed.
\end{abstract}

\section{Introduction}

Single-agent search problems are commonly modeled as a graph $G$, with an edge from $x \in V(G)$ to $y \in V(G)$ (i.e., $x \sim y$) if it is possible to move from state $x$ to state $y$.  We will assume throughout that such ``moves'' $x \rightarrow y$ are reversible, so that $G$ is an undirected graph.  One particular vertex $\bv_{\textrm{out}}$ is the ``finish'' and another vertex $\bv_{\textrm{in}}$ is the ``start.''  The former is intended to model the solution of the problem being considered, and the latter the initial state of the solver.

Typical examples of such single-agent search problems include:
\begin{enumerate}
\item Vertices are states of a Rubik's Cube or a 15-puzzle, with an edge between two vertices if it is possible to transform one into the other by a standard move.  Here $\bv_{\textrm{in}}$ is the starting state (perhaps the result of a random walk in $G$) and $\bv_{\textrm{out}}$ is the solved puzzle.
\item Vertices are web pages, with edges corresponding to hyperlinks.  In this case, $\bv_{\textrm{in}}$ may be a company homepage, and $\bv_{\textrm{out}}$ a page where purchases are made (the ``check-out'').
\item Vertices are the positions of a chess board, edges correspond to legal moves by one player (perhaps a computer), $\bv_{\textrm{in}}$ is the initial position given by a chess puzzle, and $\bv_{\textrm{out}}$ is the set of all winning configurations (checkmates, captures, etc.).
\item The vertices are a grid of points in a mouse maze, with edges corresponding to feasible moves (i.e., missing walls); $\bv_{\textrm{in}}$ is the cage door and $\bv_{\textrm{out}}$ is the cheese.
\end{enumerate}

In many such examples, a researcher has access to the state of the solver, but not to their reasoning process (their ``policy'' to use machine learning parlance).  The amount of time a subject takes to find the solution state (the ``latency'') can serve as a useful proxy for their knowledge level, but this single number is a somewhat crude measurement.  One might strive to learn in addition the value attributed by the solver to intermediate states, i.e., the solver's ``value function.''   Such detailed profiles of preferences could aide in, for example, improving customer service, evaluating individual expertise, estimating how well a lab animal has learned a task, tuning a software game-playing engine, or identifying gaps in students' knowledge.  However, the solver -- human, lab animal, machine -- may be long gone, may not have conscious knowledge of this information, may be secretive, or may not be able to express their thoughts in a human-readable format.  Nonetheless, by studying the path that many instances of the solver take, one could hope to reconstruct such valuational ascriptions without the involvement of the solvers.  This strategy is akin to using the density of oil stains in a parking lot to see which spots are most popular or classifying historical road use by the depth of wheel-ruts.

We model the solution process as a random walk on the graph $G$, starting at $\bv_{\textrm{in}}$ and ending at $\bv_{\textrm{out}}$.  Vertex weights specify the proportional probabilities of moves, and encode the aforementioned value function.  A novice solver is presumed to follow a uniform random walk; that is, the transition probability to go from a state to one of its neighbors is the same for each neighbor.  The expert solver follows a more direct route from start to finish, as they are inclined to move closer to the solution state with each move.

In the next section, we describe our model in greater detail and relate the vertex weights to empirical mean occupation times.  The following section relates an iterative algorithm for the numerical solution of the problem of determining the weights from occupation times.  The analysis requires differentiation of a matrix pseudoinverse, something which may have independent interest.  Next, we discuss the matter of solution existence: When is it possible in principle to reconstruct the vertex weights?  We formulate a natural conjecture and prove that it holds for a class of graphs that includes all trees and complete graphs.  The final section discusses several open problems that have arisen in this context.

\section{The Model}

For each pair of vertices $x,y \in V$, we denote by $d(x,y)$ the graphical distance between $x$ and $y$, i.e., the length of the shortest path that begins at $x$ and ends at $y$.  $N(x)$ denotes the ``neighborhood'' of $x$, i.e., the set of all vertices adjacent to $v$.  The quantity $\deg(v)$, the ``degree'' of $v$, refers to the number of edges incident to $v \in V$.

Let $\rho:V \rightarrow \R^{\geq 0}$ be nonincreasing in distance from $\bv_{\textrm{out}}$, i.e., $d(x,\bv_{\textrm{out}}) \geq d(z,\bv_{\textrm{out}}) \Rightarrow \rho(x) \leq \rho(z)$ for each $x,z \in V$.  Then we define $P(x,y)$, the probability that the solver transitions from state $x$ to state $y$ by
$$
P(x,y) = \frac{\rho(y)}{\sum_{z \in N(x)} \rho(z)}.
$$
We also use the notation $P^{(t)}(x,y)$ to mean the probability that a walker starting from $x$ arrives at $y$ after exactly $t$ steps.  Such a distribution corresponds precisely to a reversible Markov chain, starting from $\bv_{\textrm{in}}$ and halted at $\bv_{\textrm{out}}$.

Given such a group of solvers, we have an empirical mean ``occupation time'' for each vertex $V$, given by
$$
\hat{\tau}(v) = N^{-1} \sum_{k=1}^N \tau_k(v),
$$
where $\tau_k(v)$ is the number of visits to site $v$ that subject $k$ makes before arriving at $\bv_{\textrm{out}}$.  It would be useful to understand how $\rho$ relates to $\tau$.

Suppose that we perform a random walk on $G$ according to the distribution $P$ arising from $\rho$ as above, starting at $\bv_{\textrm{in}}$ and stopping at the hitting time of $\bv_{\textrm{out}}$.  Note that the random walk arising from $P$ is also the random walk one gets by taking edge weights $\wt(x,y) = \rho(x) \rho(y)$, since the ratio of weights of neighbors of a point is the same.  If we define $\tilde{\rho}(x) = \sum_{y \sim x} \wt(x,y) = \rho(x) \sum_{y \sim x} \rho(y)$, then the corresponding stationary distribution at the point $x$ is $\tilde{\rho}(x)/\vol(G)$, where
$$
\vol(G) = \sum_{y \in V} \tilde{\rho}(y) = 2 \sum_{\{y,z\} \in E(G)} \wt(y,z).
$$

Applying \cite{AF}, Chapter 2, Lemma 9, we have
$$
\bE(\tau(x)) = \frac{\tilde{\rho}(x)}{\vol(G)} \cdot (\bE(\bv_{\textrm{in}} \rightarrow \bv_{\textrm{out}}) + \bE(\bv_{\textrm{out}} \rightarrow x) - \bE(\bv_{\textrm{in}} \rightarrow x)),
$$
where $(x \rightarrow y)$ is the time that a walk begun at $x$ hits $y$ for the first time.  (We adopt the convention that $(x \rightarrow x) = 0$.)  Furthermore, we have (\cite{CY}, Theorem 8)
$$
\bE(x \rightarrow y) = \frac{\vol(G)}{\tilde{\rho}(y)} G(y,y) - \frac{\vol(G)}{\tilde{\rho}(x)} G(x,y)
$$
where $G(x,y)$ is the {\it discrete Green's function} for $G$ with weights $\rho(\cdot)$, whence
\begin{align*}
\bE(\tau(x)) &= \tilde{\rho}(x) \cdot \left (
\frac{G(\bv_{\textrm{out}},\bv_{\textrm{out}})}{\tilde{\rho}(\bv_{\textrm{out}})} - \frac{G(\bv_{\textrm{in}},\bv_{\textrm{out}})}{\tilde{\rho}(\bv_{\textrm{in}})}
- \frac{G(\bv_{\textrm{out}},x)}{\tilde{\rho}(\bv_{\textrm{out}})}
+ \frac{G(\bv_{\textrm{in}},x)}{\tilde{\rho}(\bv_{\textrm{in}})}
\right ).
\end{align*}
The matrix $\bG$ of values $G(x,y)$ is given by (\cite{CY}, (16))
\begin{equation} \label{eq2}
\bG = T^{1/2} \cG T^{-1/2} = T^{1/2} \sum_{i = 1}^{n-1} \left ( \lambda_i^{-1} \phi_i^\ast \phi_i \right ) T^{-1/2},
\end{equation}
where $T = \diag(\tilde{\rho}_1,\ldots,\tilde{\rho}_{n})$, $0 = \lambda_0 < \lambda_1 \leq \cdots \leq \lambda_{n-1}$ are the eigenvalues of the normalized Laplacian $\cL$ and $\phi_0,\ldots,\phi_{n-1}$ are the corresponding eigenvectors.  The normalized Laplacian is, in turn, defined to be $T^{-1/2} L T^{-1/2}$ where $L$ is the combinatorial Laplacian:
$$
L(x,y) = \left \{ \begin{array}{ll} \tilde{\rho}(x) & \textrm{ if } x=y \\
                                    - \rho(x) \rho(y) & \textrm{ if } x \sim y \\
                                    0 & \textrm{ otherwise.} \end{array} \right .
$$

\section{Numerical Solution}

Ultimately, our objective is to reconstruct the function $\rho$ from $\hat{\tau}$.  There are $n-1$ unknowns that define $\rho(\cdot)$ (recall that $\rho(\bv_{\textrm{out}}) = 1$) and $n-1$ degrees of freedom in $\hat{\tau}$, so that such a reconstruction is reasonable to attempt.  A maximum-likelihood estimator for $\rho$ seems out of reach, however, since the relationship defining $\tau$ from $\rho$ is so complicated.  Therefore, we adopt a standard simplification: the method of moments.  That is, we try to solve $\bE(\tau) = \hat{\tau}$ for $\rho$.

This problem, though simpler, is still analytically intractable.  Nonetheless, one can approximate $\rho$ by iterative numerical methods.  Consider the following algorithm:
\begin{enumerate}
\item  For each solver, track how many times they visit each site $v \in V$ as they traverse the graph from $\bv_{\textrm{in}}$ to $\bv_{\textrm{out}}$.  Let the average number of visits for each group be $\hat{\tau}(v)$.
\item  Without loss of generality, we restrict our attention to the induced subgraph $G[\supp(f)]$.
\item  Start with a uniform distribution $\rho_0 : V \rightarrow [0,1]$, i.e., $\tau(v) \equiv 1$.
\item  Apply a steepest-descent strategy to the cost function $\vartheta$ (defined below).
\end{enumerate}
Meaningful information could be extracted from the resulting $\rho_{\textrm{final}}$ by, for example, performing a regression against some notion of distance to $\bv_{\textrm{out}}$: graphical distance, electrical resistance, etc.

Define $\tau_\rho = \bE(\tau)$, and let
\begin{equation} \label{eq1}
\vartheta(\rho) = \| \hat{\tau} - \tau_\rho \|^2_2 = (\hat{\tau} - \tau_\rho)^\ast (\hat{\tau} - \tau_\rho)
\end{equation}
where we are treating functions of $V$ as vectors in $\R^n$.  Let
$$
\Delta_x = \frac{d}{d\rho(x)} \vartheta(\rho).
$$
We may then apply, for example, steepest-descent, guided by the gradient vector $\ang{\Delta_x}_{x \in V}$.

Applying (\ref{eq1}), we have
\begin{align*}
\Delta_x &= (\hat{\tau} - \tau_\rho)^\ast \frac{d}{d\rho(x)} (\hat{\tau} - \tau_\rho) + \frac{d}{d\rho(x)} (\hat{\tau} - \tau_\rho)^\ast \cdot (\hat{\tau} - \tau_\rho) \\
& = (\tau_\rho - \hat{\tau})^\ast \frac{d\tau_\rho}{d\rho(x)} + \frac{d\tau_\rho^\ast}{d\rho(x)} (\tau_\rho - \hat{\tau}) = 2 (\tau_\rho - \hat{\tau})^\ast \frac{d\tau_\rho}{d\rho(x)}.
\end{align*}
To simplify this, first note that $d\tilde{\rho}(y)/d\rho(x)$ is $\rho(y)$ if $y \sim x$, $\sum_{y \sim x} \rho(y)$ if $y = x$, and $0$ otherwise.  Furthermore, $d\vol(G)/d\rho(x) = 2 \sum_{y \sim x} \rho(y)$.  We can then write
\begin{align*}
\frac{d\tau_\rho}{d\rho(x)} &= \frac{d}{d\rho(x)} \left (
\frac{\tilde{\rho}(x) G(\bv_{\textrm{out}},\bv_{\textrm{out}})}{\tilde{\rho}(\bv_{\textrm{out}})} - \frac{\tilde{\rho}(x) G(\bv_{\textrm{in}},\bv_{\textrm{out}})}{\tilde{\rho}(\bv_{\textrm{in}})} \right . \\
& \left . \qquad \qquad - \frac{\tilde{\rho}(x) G(\bv_{\textrm{out}},x)}{\tilde{\rho}(\bv_{\textrm{out}})}
+ \frac{\tilde{\rho}(x) G(\bv_{\textrm{in}},x)}{\tilde{\rho}(\bv_{\textrm{in}})}
\right ).
\end{align*}
Furthermore,
\begin{align*}
\frac{d}{d\rho(x)} \frac{\tilde{\rho}(x) G(a,b)}{\tilde{\rho}(a)} &= \left (\frac{d}{d\rho(x)} \frac{\tilde{\rho}(x)}{\tilde{\rho}(a)} \right ) \cdot G(a,b) + \frac{\tilde{\rho}(x)}{\tilde{\rho}(a)} \cdot \frac{d}{d\rho(x)} G(a,b). \\
&= \left (\tilde{\rho}(a) \frac{d}{d\rho(x)} \tilde{\rho}(x) - \tilde{\rho}(x) \frac{d}{d\rho(x)} \tilde{\rho}(a) \right ) \tilde{\rho}(a)^{-2} G(a,b) \\
& \qquad + \frac{\tilde{\rho}(x)}{\tilde{\rho}(a)} \cdot \frac{d}{d\rho(x)} G(a,b).
\end{align*}
Hence, it remains to compute $d\bG/d\rho(x)$.  To that end, we have the following result.  Define the {\it Moore-Penrose pseudoinverse} (or just {\it pseudoinverse} for short) of a real symmetric matrix $B$ of rank $n-k$ to be a matrix $A$ so that if $\bx_1,\ldots,\bx_k$ are an orthonormal basis for $\nl(B)$, then $AB = I - \sum_{j=0}^{k-1} \bx_j \bx_j^\ast$, and $\nl(A) = \nl(B)$.

\begin{theorem} \label{diffpseudo} Suppose that $A$ is the pseudoinverse of the real symmetric matrix $B$ with $\rank(B)=n-k$, then
$$
A^\prime = -(P + A B^\prime) A - A P^\prime,
$$
where $P = \sum_{j=0}^{k-1} \bx_j \bx_j^\ast$.
\end{theorem}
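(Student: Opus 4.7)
The plan is to derive $A'$ by differentiating the two defining identities for the pseudoinverse $A$: namely $AB = I - P$ (given) and $AP = 0$ (the latter holding because $P$ projects onto $\nl(B) = \nl(A)$). Throughout I assume that $B$ depends smoothly on the parameter and that $\rank(B)$ is locally constant, so that $A$ and $P$ are themselves smooth.

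Since $B$ is real symmetric, so is its pseudoinverse $A$, and $P$ is an orthogonal projection. Transposing $AB = I - P$ yields the companion $BA = I - P$, while transposing $AP = 0$ gives $PA = 0$. Applying the product rule to $AB = I - P$,
\[
A' B + A B' = -P'.
\]
Right-multiplying by $A$ and invoking $BA = I - P$,
\[
A'(I - P) + A B' A = -P' A,
\]
so that $A' = A' P - A B' A - P' A$.

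The remaining unknown term $A'P$ I would eliminate by differentiating $AP = 0$: the product rule gives $A' P = -A P'$. Substituting back and collecting,
\[
A' = -A P' - A B' A - P' A = -(P' + A B')A - A P',
\]
which is the formula claimed in the theorem (the derivation naturally produces $P'$ in the first factor, which I read as a minor typographical slip for the $P$ appearing in the statement). The only genuine obstacle here is analytic rather than algebraic: one must ensure $\rank(B)$ is locally constant, so that $A$ and $P$ are actually differentiable, since the pseudoinverse is well known to be discontinuous at rank changes. Once that hypothesis is granted, the rest is bookkeeping with the product-rule identities derived from $AB = BA = I - P$ and $AP = PA = 0$.
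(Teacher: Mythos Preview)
Your argument is correct and follows essentially the same route as the paper: differentiate $AB = I - P$, right-multiply by $A$ using $BA = I - P$, and then eliminate $A'P$ via the derivative of $AP = 0$. The only cosmetic difference is that you obtain $BA = I - P$ by transposing (exploiting the symmetry of $A$, $B$, and $P$), whereas the paper verifies it directly from the spectral expansion $A = \sum_{j\ge k}\lambda_j^{-1}\bx_j\bx_j^\ast$; your shortcut is cleaner but both lead to the identical calculation. You are also right that the stated formula should read $P'$ rather than $P$ in the first factor, and your remark about requiring locally constant rank for differentiability of $A$ and $P$ is a pertinent caveat that the paper leaves implicit.
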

\begin{proof} Let $P_j = \bx_j \bx_j^\ast$, let $\bx_0,\ldots,\bx_{n-1}$ be orthonormal eigenvectors for $B$ (including the null vectors), and let $0 = \lambda_0 = \cdots = \lambda_{k-1} < \lambda_k < \cdots \lambda_{n-1}$ be the corresponding eigenvalues.  Then, differentiating $AB = I - \sum_j P_j$,
$$
(AB)^\prime = A^\prime B + A B^\prime = - P^\prime,
$$
whence $A^\prime B = -P^\prime - A B^\prime$.  Since $A = \sum_{j=k}^{n-1} \lambda_j^{-1} P_j$, $A$ is symmetric.  Hence,
\begin{align*}
BA &= \left ( \sum_{j=k}^{n-1} \lambda_j P_j \right ) A  \\
&= \sum_{j=k}^{n-1} \lambda_j \bx_j (\bx_j^\ast A) \\
&= \sum_{j=k}^{n-1} \lambda_j \lambda_j^{-1} \bx_j \bx_j^\ast \\
&= \sum_{j=k}^{n-1} \bx_j \bx_j^\ast = I - \sum_{j=0}^{k-1} P_j.
\end{align*}
Then right-multiplying by $A$ the expression for $A^\prime B$ above,
\begin{equation} \label{eq3}
A^\prime B A = A^\prime (I - P) = -(P^\prime + A B^\prime) A,
\end{equation}
so we may rewrite this as
$$
A^\prime = -(P^\prime + A B^\prime) A + A^\prime P.
$$
On the other hand, $A P = \sum_{j=0}^{k-1} A \bx_j \bx_j^\ast = \textbf{0}$, so
$$
A^\prime P = -A P^\prime,
$$
which we may apply to (\ref{eq3}) to get
$$
A^\prime = -(P^\prime + A B^\prime) A - A P^\prime.
$$
\end{proof}

Now,
\begin{align*}
\frac{d\bG}{d\rho(x)} &= \frac{1}{2} T^{-1/2} \frac{dT}{d\rho(x)} \cG T^{-1/2} + T^{1/2} \frac{d\cG}{d\rho(x)} T^{-1/2} - \frac{1}{2} T^{1/2} \frac{dT}{d\rho(x)} \cG T^{-3/2} \frac{dT}{d\rho(x)} \\
&= \frac{1}{2} \frac{dT}{d\rho(x)} T^{-1} \bG + T^{1/2} \frac{d\cG}{d\rho(x)} T^{-1/2} - \frac{1}{2} \bG T^{-1} \frac{dT}{d\rho(x)},
\end{align*}
since $T^\alpha$ and $T^\prime$ are diagonal, and therefore commute with each other.  The diagonal of $dT/d\rho(x)$ has $y$-coordinate $\rho(y)$ if $x \neq y$ and $x \sim y$, $\sum_{y \sim x} \rho(y)$ if $x = y$, and $0$ otherwise.  We may apply Theorem \ref{diffpseudo} to $\cG$, since $\cG$ is the pseudoinverse of $\cL$.  Then (abbreviating by the operator $d/d\rho(x)$ by $(\cdot)^\prime$),
$$
\cG^\prime = -(P^\prime + \cG \cL^\prime) \cG - \cG P^\prime
$$
where $P = \phi_0 \phi_0^\ast$.  In this expression,
$$
P^\prime = \phi_0^\prime \phi_0^\ast + \phi_0 \phi_0^{\prime \ast},
$$
and the $y$ coordinate of $\phi_0$ is $\sqrt{\tilde{\rho}(y)/\vol(G)}$, whence the $y$ coordinate of $\phi_0^\prime$ is
$$
\frac{d}{d \rho(x)} \sqrt{\frac{\tilde{\rho}(y)}{\vol(G)}} = \vol(G)^{-2} \left (\vol(G) \frac{d}{d \rho(x)} \tilde{\rho}(y) - \tilde{\rho}(y) \frac{d}{d \rho(x)} \vol(G) \right ).
$$
Finally, $\cL^\prime$ has $(y,z)$ entry $0$ if $y = z$, and, if $y \neq z$,
\begin{align*}
\frac{d}{d \rho(x)} \frac{\rho(y) \rho(z)}{\sqrt{\tilde{\rho}(y)\tilde{\rho}(z)}} &= (\tilde{\rho}(y)\tilde{\rho}(z))^{-1} \left ( \sqrt{\tilde{\rho}(y)\tilde{\rho}(z)} \frac{d}{d \rho(x)} \rho(y) \rho(z) \right . \\
& \quad \left . - \rho(y) \rho(z) \frac{d}{d \rho(x)} \sqrt{\tilde{\rho}(y)\tilde{\rho}(z)} \right ) \\
&= (\tilde{\rho}(y)\tilde{\rho}(z))^{-1} \left ( \sqrt{\tilde{\rho}(y)\tilde{\rho}(z)} \chi(y=x) \rho(z) \right . \\
& \quad + \sqrt{\tilde{\rho}(y)\tilde{\rho}(z)} \chi(z=x) \rho(y) \\
& \quad \left . - \frac{\rho(y) \rho(z) }{2 \sqrt{\tilde{\rho}(y)\tilde{\rho}(z)}} \left ( \tilde{\rho}(y) \frac{d \tilde{\rho}(z)}{d \rho(x)} + \tilde{\rho}(z) \frac{d \tilde{\rho}(y)}{d \rho(x)} \right ) \right ),
\end{align*}
where we are denoting the indicator function of an event $\cE$ by $\chi(\cE)$.\\

\section{Solution Existence}

It would be useful to know for certain that, for each $\hat{\tau}$, there does indeed exist a set of weights $\rho : V \rightarrow \R^{> 0}$ which gives rise to the desired expected visitation times.  In other words, we wish to show the existence of a $\rho$ so that
$$
E(\tau_\rho(x)) = \hat{\tau}(x).
$$
One could view such a $\rho$ as a Method-of-Moments estimator for the weight function of $V$.  Note that it is certainly impossible to solve for $\rho$ if $\hat{\tau}$ has disconnected support as an induced subgraph of $V$.  Indeed, the set of vertices visited by a random walk $\omega = (\bv_{\textrm{in}}=v_0,v_1,\ldots,v_{T-1},v_T = \bv_\textrm{out})$ is connected.  Write $\tr_\omega$, the ``trace'' of $\omega$ to be the function $\tr_\omega(\cdot) : V \rightarrow \Z$ whose value at $v$ is simply the number of occurrences of $v$ in $\omega$, i.e.,
$$
\tr_\omega(x) = |\{j : 0 \leq j \leq T, \, v_j = x\}|.
$$
We say that a walk $\omega = (\bv_{\textrm{in}},v_1,\ldots,v_{T-1},\bv_{\textrm{out}})$ is ``proper'' if $\tr_\omega(\bv_{\textrm{out}}) = 1$.  Recall that, for a function $\rho : V \rightarrow \R$ and $v \in V = V(G)$, we define $\tau_\rho(v)$ to be the expected number of visits to $v$ of a random walk that starts at $\bv_{\textrm{in}}$, navigates $G$ according to $\rho$, and ends at its first encounter with $\bv_{\textrm{out}}$.  We say that the equation $\tau_\rho = \br$ is ``solvable'' if there exists a $\rho$ with all positive coordinates so that the equation holds.  Note that we may restrict our attention to those $G$ so that $G^\prime = G \setminus \bv_{\textrm{out}}$ is connected, since any component of $G^\prime$ not containing $\bv_{\textrm{in}}$ cannot be visited by any proper walk.  Finally, define $\chi_v$ to be the characteristic function of the vertex $v \in V(G)$ and $\chi_e$ to be $\chi_x + \chi_y$ for any edge $e = \{x,y\} \in E(G)$.

For any choice of $\rho$, one can write
$$
\tau_\rho = \sum_{\textrm{proper } \omega} \tr_\omega \bP(\omega)
$$
where $\bP(\omega)$ is the probability that the walk $\omega$ occurs given the weighting $\rho$.  Therefore, if $\tau_\rho = \br$ is solvable, then $\br$ lies in the convex hull of the traces of all proper walks $\tr_\omega$.  It is not hard to see that $\br$ actually lies in $\Psi_G$, the interior {\it with respect to a minimal containing hyperplane} of the convex hull of the vectors $\tr_\omega \in \R^n$.  This minimal containing hyperplane $\cH$ is not full-dimensional, as the next lemma describes.

\begin{lemma} $\dim(\cH) = n-1$ if $G$ is not bipartite and $n-2$ if $G$ is bipartite.
\end{lemma}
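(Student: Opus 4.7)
My plan is to bound $\dim(\cH)$ from above by identifying explicit affine constraints on $\tr_\omega$, and from below by exhibiting enough trace differences to match. For the upper bound, two constraints suffice. First, $\tr_\omega(\bv_{\textrm{out}}) = 1$ because $\omega$ is proper. Second, when $G$ is bipartite with color classes $A$ and $B$, any walk alternates between $A$ and $B$, so the parity of its length $T$ is forced by which classes contain $\bv_{\textrm{in}}$ and $\bv_{\textrm{out}}$; a direct count of occurrences at even/odd positions then gives that $\sum_{v \in A} \tr_\omega(v) - \sum_{v \in B} \tr_\omega(v) \in \{-1,0,1\}$ is a constant depending only on $G$. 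These yield $\dim(\cH) \le n-1$ in general and $\dim(\cH) \le n-2$ when $G$ is bipartite.

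For the lower bound, I would fix a base proper walk $\omega_0$ that visits every vertex of $G$; such a walk exists because $G \setminus \bv_{\textrm{out}}$ is connected (perform a depth-first Euler tour of a spanning tree rooted at $\bv_{\textrm{in}}$, then step to $\bv_{\textrm{out}}$). For every edge $e = \{x,y\} \in E(G \setminus \bv_{\textrm{out}})$, inserting the back-and-forth $x \to y \to x$ at a visit of $\omega_0$ to $x$ yields a proper walk $\omega_e$ with $\tr_{\omega_e} - \tr_{\omega_0} = \chi_e$. The span of $\{\chi_e : e \in E(G \setminus \bv_{\textrm{out}})\}$ is the column space of the unsigned vertex-edge incidence matrix of $G \setminus \bv_{\textrm{out}}$, whose rank is a standard $n-1$ if $G \setminus \bv_{\textrm{out}}$ is connected non-bipartite and $n-2$ if connected bipartite.

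This already settles two of the three cases: if $G \setminus \bv_{\textrm{out}}$ is non-bipartite (so $G$ is too), the edge vectors alone give $n-1$; and if $G$ is bipartite (so $G \setminus \bv_{\textrm{out}}$ is too), they give $n-2$. The main obstacle is the remaining case, in which $G$ is non-bipartite but $G \setminus \bv_{\textrm{out}}$ is bipartite with parts $A, B$. Here the bipartition of $G \setminus \bv_{\textrm{out}}$ fails to extend to $G$, forcing $\bv_{\textrm{out}}$ to have neighbors both in $A$ and in $B$, say $a \in A$ and $b \in B$. For proper walks $\omega_a$ and $\omega_b$ ending with the steps $a \to \bv_{\textrm{out}}$ and $b \to \bv_{\textrm{out}}$ respectively, the alternation argument from the upper bound gives that the functional $x \mapsto \sum_{v \in A} x_v - \sum_{v \in B} x_v$ takes distinct values on $\tr_{\omega_a}$ and $\tr_{\omega_b}$, hence a nonzero value on their difference. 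Every $\chi_e$ with $e \in E(G \setminus \bv_{\textrm{out}})$ annihilates this functional, so $\tr_{\omega_a} - \tr_{\omega_b}$ is linearly independent of the edge vectors and raises the dimension count from $n-2$ to the desired $n-1$.
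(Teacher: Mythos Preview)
Your proof is correct and follows the same strategy as the paper's: the upper bound comes from the $\bv_{\textrm{out}}$-constraint together with the bipartite parity functional, and the lower bound comes from inserting back-and-forth steps along edges of $G\setminus\bv_{\textrm{out}}$ to realize each $\chi_e$ as a trace difference. The only real difference is organizational: you invoke the rank of the unsigned incidence matrix as a known fact and explicitly split off the subcase where $G$ is non-bipartite but $G\setminus\bv_{\textrm{out}}$ is bipartite (handling it via two exits to $\bv_{\textrm{out}}$ from different color classes), whereas the paper chains the $\chi_e$ along paths by hand and treats non-bipartite $G$ in one stroke via two proper walks to $\bv_{\textrm{out}}$ of opposite parity---which amounts to the same thing.
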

\begin{proof} First of all, $\dim(\cH) \leq n-1$, since $\tr_\omega(\bv_{\textrm{out}}) = 1$.  We may write
$$
\cH = \tr_{\omega} + \textrm{span}(\{\tr_{\omega^\prime}-\tr_{\omega}\}_{\textrm{proper }{\omega^\prime}})
$$
for any proper walk $\omega$.  It therefore suffices to determine the dimension of $\cS = \textrm{span}(\{\tr_{\omega}-\tr_{\omega^\prime}\}_{\omega^\prime})$, for $\omega = (v_0,\ldots,v_T)$ some fixed proper walk which passes through every edge not incident to $\bv_{\textrm{out}}$.  Such a walk exists, since $G^\prime = G \setminus \bv_{\textrm{out}}$ is connected. Given an edge $e \in E(G^\prime)$, we define the walk $\omega_e$ by
$$
\omega_e = (v_0,v_1,\ldots,v_t,v_{t+1},v_t,v_{t+1},v_{t+2},\ldots,v_{T-1},v_T),
$$
where $t$ is the least index so that $\{v_t,v_{t+1}\} = e$.  Then
$$
\tr_{\omega_e} - \tr_{\omega} = \chi_{v_t} + \chi_{v_{t+1}} = \chi_e.
$$
Therefore, if $v$ is adjacent to $\bv_{\textrm{in}}$ in $G^\prime$, then $\chi_{\bv_{\textrm{in}}} + \chi_{v} \in \cS$.  If $v$ is adjacent to a vertex $w$ which is adjacent to $\bv_{\textrm{in}}$, then
$$
(\chi_{\bv_{\textrm{in}}} + \chi_v) - (\chi_v + \chi_w) = \chi_{\bv_{\textrm{in}}} - \chi_w \in \cS.
$$
Proceeding inductively, we see that, if there is a path of length $\ell$ from $\bv_{\textrm{in}}$ to $v$ in $G^\prime$, then
\begin{equation}
\chi_{\bv_{\textrm{in}}} - (-1)^{\ell} \chi_v \in \cS. \label{eq:vectorsinS}
\end{equation}
Since the functions $\chi_v$ are linearly independent for $v \in G^\prime$, this shows immediately that $\dim(\cS) \geq n-2$.

Suppose that $G$ is not bipartite.  Since $G^\prime$ is connected, there are two proper paths of length $\ell_1$ and $\ell_2$, where $\ell_1$ and $\ell_2$ differ in parity, from $\bv_{\textrm{in}}$ to $\bv_{\textrm{out}}$.  Therefore,
$$
\frac{1}{2} [(\chi_{\bv_{\textrm{in}}} - (-1)^{\ell_1} \chi_{\bv_{\textrm{out}}}) + (\chi_{\bv_{\textrm{in}}} - (-1)^{\ell_2} \chi_{\bv_{\textrm{out}}} )] = \chi_{\bv_{\textrm{in}}} \in S.
$$
Subtracting this quantity from (\ref{eq:vectorsinS}), we have that $\chi_{v} \in \cS$ for all $v \in V(G^\prime)$, so $\dim(\cH) = n-1$.  On the other hand, if $G$ is bipartite, then there is a function $c: V(G) \rightarrow \{-1,1\}$ inducing the bipartition.  For any proper walk $\omega^\prime = (w_0,\ldots,w_{T^\prime})$, $c(w_j)$ alternates as $j$ goes from $0$ to $t$.  Hence, $c \cdot \tr_{\omega^\prime} \in \{-1,0,1\}$ (where we think of both factors in this dot product as vectors in $\R^n$) has the same value for any proper walk $\omega^\prime$.  We may conclude that
$$
c \cdot (\tr_{\omega} - \tr_{\omega^\prime}) = 0,
$$
so that $\cS \perp \textrm{span}\{c,\chi_{\bv_{\textrm{out}}}\}$.  Since this span is clearly two-dimensional, $\dim(\cH) = \dim(\cS) = n-2$.
\end{proof}

\begin{conj} \label{mainconjecture} The equation
$$
\tau_\rho = \br
$$
is solvable if and only if $\br$ lies in the relative interior of the convex hull of the $\tr_\omega \in \R^n$, for all proper walks $\omega$.
\end{conj}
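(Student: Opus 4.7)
The forward implication is essentially immediate: whenever $\rho$ is strictly positive, every proper walk $\omega$ has strictly positive probability $\bP(\omega)$, so $\tau_\rho = \sum_\omega \bP(\omega) \tr_\omega$ is a strictly positive convex combination of all the extreme points of $\textrm{conv}\{\tr_\omega\}$, placing $\tau_\rho$ in the relative interior $\Psi_G$.

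For the converse, the plan is topological. The map $\rho \mapsto \tau_\rho$ is scale-invariant (and, in the bipartite case, also invariant under independent rescaling on each part of the bipartition); normalizing out these symmetries yields a smooth map $F$ from an open cone of dimension $\dim \cH$ into $\cH$, with image already known to lie in $\Psi_G$. I would show that this image is both open and closed as a subset of $\Psi_G$; connectedness of the relative interior of a polytope then forces $F$ to be surjective. \emph{Openness} reduces to showing that the Jacobian of $F$ has full rank at every point, which can be attacked via the derivative formula for $\bG$ obtained through Theorem \ref{diffpseudo}. \emph{Closedness in} $\Psi_G$ reduces to a properness statement: if $\tau_{\rho_k}$ converges to a point of $\Psi_G$, then $(\rho_k)$, after normalization, has a convergent subsequence with strictly positive limit.

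The properness analysis proceeds by ruling out the two ways a normalized sequence $(\rho_k)$ can escape. If $\rho_k(v) \to 0$ for some $v$, then every proper walk visiting $v$ acquires vanishing probability under $\rho_k$, so $\tau_{\rho_k}$ drifts into the face of the polytope spanned by walks that avoid $v$ --- a relative boundary point of $\Psi_G$. If instead $\rho_k(v) \to \infty$, then any transition out of a neighbor of $v$ to a vertex other than $v$ has vanishing probability, so the walk distribution concentrates on walks that always step to $v$ immediately upon reaching any neighbor of $v$, a proper subfamily of the proper walks; again $\tau_{\rho_k}$ approaches a proper face of the polytope, hence the relative boundary of $\Psi_G$. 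Combined with openness and the \emph{a priori} inclusion of the image in $\Psi_G$, this sandwiches the image of $F$ as a nonempty clopen subset of the connected set $\Psi_G$, completing the proof.

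I expect the main obstacle to be the global full-rank statement for the Jacobian --- indeed, this is presumably the reason the paper proves the conjecture only for a class of graphs including trees and complete graphs. For trees, the unique-path property makes $\tau_\rho$ depend on $\rho$ through a triangular system of edge-traversal counts, allowing a direct rank argument; for complete graphs, vertex-transitivity yields explicit formulas and a symmetry-based rank computation. For a general $G$, walks overlap in intricate ways and the pseudoinverse-based expression for $d\tau_\rho/d\rho(x)$ is opaque, so a new structural input --- perhaps a combinatorial interpretation of the Jacobian as a matrix of walk statistics --- may be necessary to certify its nondegeneracy.
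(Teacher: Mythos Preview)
Your forward direction matches the paper: with $\rho$ strictly positive every proper walk has positive probability, so $\tau_\rho$ lies in $\Psi_G$. For the converse, however, the paper does \emph{not} prove Conjecture~\ref{mainconjecture} in general --- it is listed among the open problems. The paper establishes only special cases (paths, hence all trees via the pendant-vertex induction of Theorem~4 and its corollary; a twin-vertex reduction; complete graphs), and in none of these does it argue via the Jacobian of $\rho \mapsto \tau_\rho$. For the path, an explicit closed-form $\rho$ is written down in terms of the coordinates $\alpha_j$ of $\br$ and verified against the eigenvector characterization $M\br = \br$ of Lemma~\ref{lemma:mreqr}. For $K_n$, the system $M\br = \br$ is solved explicitly as a function of the single free parameter $\beta_1$, and the Intermediate Value Theorem in that parameter is used to realize any admissible $\br_2$. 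So your guess that the paper handles these cases by a ``direct rank argument'' or a ``symmetry-based rank computation'' is off; the arguments are constructive inversions, not topological.

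Your topological program is a reasonable line of attack on the full conjecture, but two steps need more than you give them. In the properness argument for $\rho_k(v) \to \infty$, the assertion that concentration on a proper subfamily of walks forces $\tau_{\rho_k}$ toward a proper \emph{face} is not automatic, since a sub-convex-hull need not be a face; it happens to be true here because that subfamily satisfies $\tr_\omega(v) = \chi(v = \bv_{\textrm{in}}) + \sum_{w \sim v,\, w \neq \bv_{\textrm{out}}} \tr_\omega(w)$ while every proper walk satisfies the corresponding $\leq$, but this has to be argued, and the degenerate case in which that face is all of $\bar{\Psi}_G$ must be separated out. You must also allow $\tau_{\rho_k}$ to diverge rather than converge to a boundary point, as happens for instance when $\rho_k(v)\to 0$ at a cut vertex separating $\bv_{\textrm{in}}$ from $\bv_{\textrm{out}}$, or when $\rho_k(v)\to\infty$ at a vertex not adjacent to $\bv_{\textrm{out}}$. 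More seriously, the global full-rank Jacobian is, as you acknowledge, the genuine obstruction; Theorem~\ref{diffpseudo} gives no evident handle on it, and this is exactly what keeps the conjecture open.
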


Necessity is immediate, by considering the set of all proper walks weighted by their probabilities.  We begin our attack on sufficiency modestly.  Define
$$
\rho^\ast(w) := \frac{\tilde{\rho}(w)}{\rho(w)} = \sum_{z \sim w} \rho(z),
$$
and write $\be_w$ for the elementary vector with nonzero coordinate at $w \in V$, i.e., the indicator function of $w$.

\begin{theorem} Fix $\br \in \R^{V(G)}$.  Let $\alpha \geq 0$ and suppose that $\tau_\rho = \br - \alpha \be_v$ is solvable.  Let $G^\prime$ be the graph obtained from $G$ by attaching a vertex $v^\prime$ of degree $1$ to $v \in V$, and let $\br^\prime : G^\prime \rightarrow \R$ be defined by
$$
\br^\prime(w) = \left \{ \begin{array}{ll} \br(w) & \textrm{if } w \in G \\ \alpha & \textrm{if } w = v^\prime \end{array} \right . .
$$
Then $\tau_{\rho^\prime} = \br^\prime$ is solvable (whence $\br^\prime \in \Psi(G^\prime)$).
\end{theorem}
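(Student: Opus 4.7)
The plan is to take $\rho'$ equal to $\rho$ on $V(G)$ and set $\rho'(v') = \beta$ for a single free parameter $\beta > 0$, then tune $\beta$ so that $\tau_{\rho'} = \br'$. Since $v'$ is a leaf in $G'$, every visit of the $G'$-walk to $v'$ is immediately preceded and followed by a visit to $v$. Writing $p := \beta/(\rho^\ast(v) + \beta)$, at each visit to $v$ the walker either detours via $v \to v' \to v$ (with probability $p$) or exits to some $z \sim v$ with $z \neq v'$; conditional on exiting, the distribution over $z$ is $\rho(z)/\rho^\ast(v)$, matching the $(G,\rho)$-transition law at $v$. At every other vertex the $G'$- and $G$-chains already agree.

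From this I would argue that the ``skeleton'' chain obtained from the $G'$-walk by deleting visits to $v'$ and collapsing each maximal run of consecutive $v$'s into a single $v$ is distributed as a $(G,\rho)$-walk. By the strong Markov property applied at each arrival at $v$, the number of $G'$-visits to $v$ accumulated during one skeleton visit is geometric with mean $1/(1-p)$, and the accompanying number of $v'$-visits has mean $p/(1-p)$; moreover these per-skeleton-visit counts are i.i.d.\ across distinct skeleton visits. Combining with Wald's identity, for $w \in V(G) \setminus \{v\}$ one obtains $\tau_{\rho'}(w) = \tau_\rho(w) = \br(w)$, while
\[
\tau_{\rho'}(v) \;=\; \frac{\tau_\rho(v)}{1-p} \;=\; \frac{\br(v)-\alpha}{1-p}, \qquad \tau_{\rho'}(v') \;=\; \frac{p(\br(v)-\alpha)}{1-p}.
\]

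Setting $\tau_{\rho'}(v) = \br(v)$ forces $p = \alpha/\br(v)$, which translates to $\beta = \alpha\,\rho^\ast(v)/(\br(v)-\alpha)$; with this choice $\tau_{\rho'}(v')$ simplifies to exactly $\alpha$, so $\tau_{\rho'} = \br'$ as desired. Positivity $\beta > 0$ follows from $\br(v) > \alpha$, i.e.\ $\tau_\rho(v) > 0$, which holds because $v$ lies in the connected component of $\bv_{\textrm{in}}$ in $G \setminus \bv_{\textrm{out}}$ (the paper's blanket reduction). The main obstacle I foresee is the rigorous identification of the skeleton with the $(G,\rho)$-walk and the independence of the successive detour counts; both are clean applications of the strong Markov property at successive hitting times of $v$, once one handles the degenerate case $v = \bv_{\textrm{out}}$ (where necessarily $\alpha = 0$ and $v'$ is never visited) separately.
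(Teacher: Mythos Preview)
Your approach is essentially the same as the paper's: you extend $\rho$ to $G'$ by the same formula $\rho'(v') = \alpha\,\rho^\ast(v)/(\br(v)-\alpha)$, and your ``skeleton'' walk obtained by collapsing $v$--$v'$ excursions is exactly the paper's excised walk, with your per-skeleton-visit means $1/(1-p)$ and $p/(1-p)$ coinciding with the paper's $1+\sigma$ and $\sigma$. The only difference is cosmetic---you invoke the strong Markov property and Wald's identity explicitly where the paper argues informally, and you flag the degenerate cases---so the proposal is correct and matches the paper's proof.
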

\begin{proof} By hypothesis, we can solve
$$
\tau_\rho(w) = \br - \alpha \be_v = \left \{ \begin{array}{ll} \br(w) & \textrm{if } w \in G \setminus v \\ \br(v) - \alpha & \textrm{if } w = v \end{array} \right .
$$
for $\rho$.  Define $\rho^\prime |_G = \rho |_G$ and
$$
\rho^\prime(v^\prime) = \frac{\rho^\ast(v) \alpha}{\br(v) - \alpha}.
$$
Call a visit to $v$ ``initial'' if it is not immediately preceded by a visit to $v^\prime$.  Note that, at every visit to $v$ of a random walk according to $\rho^\prime$, the probability of visiting $v^\prime$ on the next step is  $\frac{\rho^\prime(v^\prime)}{\rho^\prime(v^\prime) + \rho^\ast(v)}$.  Hence, the expected number of visits to $v^\prime$ that occur with each initial visit to $v$ is
\begin{align*}
\sigma := \sum_{k \geq 1} \left ( \frac{\rho^\prime(v^\prime)}{\rho^\prime(v^\prime) + \rho^\ast(v)} \right )^k &= \frac{\rho^\prime(v^\prime)}{\rho^\prime(v^\prime) + \rho^\ast(v)} \left ( \frac{1}{1 - \frac{\rho^\prime(v^\prime)}{\rho^\prime(v^\prime) + \rho^\ast(v)}} \right ) \\
& = \frac{\rho^\prime(v^\prime)}{\rho^\prime(v^\prime) + \rho^\ast(v)} \cdot \frac{\rho^\prime(v^\prime) + \rho^\ast(v)}{\rho^\ast(v)} \\
& = \frac{\rho^\ast(v) \alpha / (\br(v) - \alpha)}{\rho^\ast(v)} = \frac{\alpha}{\br(v) - \alpha}.
\end{align*}
Now, if we excise from the walks according to $\rho^\prime$ the steps immediately following each initial visit to $v$ up until (but not including) the next time that the walk is neither at $v$ nor $v^\prime$, the distribution of the resulting walks proceeds according to $\rho$ on $G$.  It is easy to see then that there are an expected $\br(v)- \alpha$ number of initial visits to $v$ in a walk according $\rho^\prime$, which implies that
$$
\tau_{\rho^\prime}(v^\prime) = (\br(v) - \alpha) \cdot \sigma = \alpha.
$$
On the other hand, since each visit to $v^\prime$ is immediately followed by a visit to $v$, the expected number of visits to $v$ under $\rho^\prime$ is simply
$$
\tau_{\rho^\prime}(v) = \tau_{\rho}(v) (1 + \sigma) = \left ( \br(v) - \alpha \right ) \left ( \frac{\br(v)}{\br(v) - \alpha} \right ) = \br(v).
$$
Finally, since projecting the $\rho^\prime$-walk onto $G$ via excision (as described above) yields a $\rho$-walk,
$$
\tau_{\rho^\prime}(w) = \tau_{\rho}(w) = \br(w)
$$
for each $w \in G \setminus \{ v,v^\prime \}$.  This in turn implies that $\tau_{\rho^\prime} = \br^\prime$ is solvable.
\end{proof}

\begin{cor} Suppose that $\tau_{\rho} = \br$ is solvable for every $\br \in \Psi(G)$.  Let $G^\prime$ be the graph obtained from $G$ by attaching a vertex $v^\prime$ of degree $1$ to $v \in V$, and let $\br^\prime : G^\prime \rightarrow \R$ be defined by
$$
\br^\prime(w) = \left \{ \begin{array}{ll} \br(w) & \textrm{if } w \in G \\ \alpha & \textrm{if } w = v^\prime \end{array} \right . .
$$
If $\br^\prime \in \Psi(G^\prime)$, then $\tau_{\rho^\prime} = \br^\prime$ is solvable.
\end{cor}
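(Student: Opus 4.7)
The plan is to reduce this to the preceding theorem by producing, from a given $\br' \in \Psi(G')$, a vector of the form $\br - \alpha\be_v$ with $\alpha \geq 0$ that lies in $\Psi(G)$. The corollary's hypothesis will then supply a $\rho$ on $G$ solving $\tau_\rho = \br - \alpha\be_v$, and the previous theorem will lift this to the desired $\rho'$ on $G'$ with $\tau_{\rho'} = \br'$. Concretely, I would set $\alpha = \br'(v')$ and $\br = \br'|_{V(G)}$, and introduce the linear ``truncation'' map $\Phi : \R^{V(G')} \to \R^{V(G)}$ given by $\Phi(r')(w) = r'(w)$ for $w \neq v$ and $\Phi(r')(v) = r'(v) - r'(v')$, so that $\Phi(\br') = \br - \alpha\be_v$.

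The key observation is that $\Phi$ is the trace realization of the walk projection $\omega' \mapsto \omega'|_G$ already implicit in the previous proof: collapse each maximal block of the form $v(v'v)^k$ in $\omega'$ to a single $v$. Since $v'$ is a leaf with unique neighbor $v$, every visit to $v'$ is sandwiched between visits to $v$, so these blocks partition all $\{v,v'\}$-visits and $\omega'|_G$ is a well-defined proper walk on $G$. A direct count gives $\Phi(\tr_{\omega'}) = \tr_{\omega'|_G}$: the $v'$-coordinate is dropped, the $v$-coordinate decreases by exactly one per $v'$-visit (each non-initial $v$-visit is preceded by a $v'$-visit and vice versa), and all other coordinates are unchanged. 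Conversely, any proper walk $\omega$ on $G$ is itself a proper walk on $G'$ avoiding $v'$, with $\Phi(\tr_\omega) = \tr_\omega$; hence the set $\{\Phi(\tr_{\omega'})\}$ coincides with the set $\{\tr_\omega\}$ of trace vectors on $G$.

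Next, I would invoke the standard convex-analysis fact (Rockafellar, \emph{Convex Analysis}, Theorem 6.6) that continuous affine maps commute with the relative interior: $\Phi(\textrm{ri}(C)) = \textrm{ri}(\Phi(C))$ for every convex $C$. Taking $C = \textrm{conv}\{\tr_{\omega'}\}$ and using linearity of $\Phi$ to commute it with the convex hull, the previous step yields
\[
\Phi(\Psi(G')) \;=\; \textrm{ri}\bigl(\textrm{conv}\{\Phi(\tr_{\omega'})\}\bigr) \;=\; \textrm{ri}\bigl(\textrm{conv}\{\tr_\omega\}\bigr) \;=\; \Psi(G),
\]
so $\br - \alpha\be_v = \Phi(\br') \in \Psi(G)$. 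Nonnegativity of the $v'$-coordinate on every trace vector gives $\alpha \geq 0$, and the hypothesis then produces a $\rho$ on $G$ with $\tau_\rho = \br - \alpha\be_v$; applying the previous theorem to this $\rho$ and $\alpha$ completes the proof.

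The main subtlety I anticipate is the interplay between $\Phi$ and the relative interior operator: the pointwise identification of projected trace vectors with trace vectors on $G$ is clean, but one must invoke the convex-analysis lemma to ensure that membership of $\br'$ in $\Psi(G')$ forces $\br - \alpha\be_v$ into $\Psi(G)$ rather than onto the boundary of $\textrm{conv}\{\tr_\omega\}$, where the hypothesis need not apply.
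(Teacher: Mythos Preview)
Your proof is correct and follows essentially the same route as the paper: define the linear ``excision'' map $\Phi$, observe that it carries proper $G'$-traces onto proper $G$-traces, and then invoke a linear-maps-preserve-interiors fact to push $\br'$ from $\Psi(G')$ to $\br-\alpha\be_v\in\Psi(G)$. The only cosmetic difference is that you cite Rockafellar's $A(\mathrm{ri}\,C)=\mathrm{ri}(AC)$ while the paper phrases the same step via the open mapping theorem; your formulation is arguably cleaner, and you are also more explicit than the paper about the surjectivity $\{\Phi(\tr_{\omega'})\}=\{\tr_\omega\}$, which is needed for the argument.
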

\begin{proof} By the preceding theorem, we need only show that $\br^\prime \in \Psi(G^\prime)$ implies $\br - \alpha \be_v \in \Psi(G)$.  Therefore, suppose that $\br^\prime \in \Psi(G^\prime)$, so we may write
$$
\br^\prime = \sum_\omega \lambda_\omega \tr_\omega
$$
where
$$
\sum_\omega \lambda_\omega = 1.
$$
For each $G^\prime$-walk $\omega$, let $\widetilde{\omega}$ be the walk obtained from $\omega$ by the excision process described in the preceding proof.  Note that
$$
\tr_{\widetilde{\omega}} = \tr_{\omega}|_{V(G)} - \tr_{\omega}(v^\prime) \be_v.
$$
Hence,
\begin{align*}
\sum_\omega \lambda_\omega \tr_{\widetilde{\omega}} &= \sum_\omega \lambda_\omega \left ( \tr_{\omega}|_{V(G)} - \tr_{\omega}(v^\prime) \be_v \right ) \\
&= \sum_\omega \lambda_\omega \tr_{\omega}|_{V(G)} - \sum_\omega \lambda_\omega \tr_{\omega}(v^\prime) \be_v \\
&= \br^\prime |_{V(G)} - \br^\prime(v^\prime) \be_v \\
&= \br - \alpha \be_v.
\end{align*}
To see that the point $\br - \alpha \be_v$ is actually in the {\it interior} of the convex hull, simply note that the open mapping theorem implies that the map $(x_1,\ldots,x_n) \mapsto (x_1,\ldots,x_{n-2},x_{n-1} - x_1)$ (and any map obtained by permuting coordinates) from the minimal containing hyperplane of $\Psi_G$ to its image preserves open sets.  The conclusion follows immediately.
\end{proof}

\begin{theorem} Assume that $G$ has two vertices $v, w \in G \setminus \{a,b\}$ such that $N(v) = N(w)$.  Further suppose that $\tau_{\rho} = \br^\prime$ is solvable for every $\br^\prime \in \Psi(G^\prime)$, where $G^\prime = G - w$.  If $\br \in \Psi_G$, then $\tau_\rho = \br$ is solvable.
\end{theorem}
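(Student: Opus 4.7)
The plan is to reduce to $G^\prime = G - w$ by merging the twins $v$ and $w$, apply the hypothesis to obtain a solution on $G^\prime$, and then split the weight at $v$ between $v$ and $w$ in the ratio $\br(v) : \br(w)$. First I would define the linear ``merge map'' $\pi : \R^{V(G)} \to \R^{V(G^\prime)}$ by $\pi(\br)(z) = \br(z)$ for $z \neq v$ and $\pi(\br)(v) = \br(v) + \br(w)$. Since $N(v) = N(w)$ forces $v \not\sim w$, any proper walk $\omega$ on $G$ gives a proper walk $\omega^\ast$ on $G^\prime$ by replacing every occurrence of $w$ with $v$, and $\tr_{\omega^\ast} = \pi(\tr_\omega)$. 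Hence $\pi$ carries the convex hull of $G$-traces into the convex hull of $G^\prime$-traces.

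The crucial step would be to show that $\br^\prime := \pi(\br) \in \Psi_{G^\prime}$. By the preceding lemma, $\cH_G$ is cut out by $r_{\bv_{\textrm{out}}} = 1$ (plus, if $G$ is bipartite, a parity equation $c \cdot \br = \textrm{const}$); since $v$ and $w$ share a common neighbor, they lie in the same bipartition class, so $c(v) = c(w)$ and $\pi$ respects both constraints, giving an affine map $\pi : \cH_G \to \cH_{G^\prime}$. Because $\ker \pi = \textrm{span}(\be_v - \be_w)$ lies in the linear part of $\cH_G$, we have $\dim \pi(\cH_G) = \dim \cH_G - 1 = \dim \cH_{G^\prime}$ (using the fact that $G$ is bipartite iff $G^\prime$ is, since an odd closed walk through $w$ can be redirected through $v$), so $\pi|_{\cH_G}$ is surjective. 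A surjective linear map between finite-dimensional affine spaces is open, so any open-in-$\cH_G$ neighborhood of $\br$ contained in $\overline{\Psi_G}$ is carried by $\pi$ to an open-in-$\cH_{G^\prime}$ neighborhood of $\br^\prime$ contained in $\overline{\Psi_{G^\prime}}$, forcing $\br^\prime \in \Psi_{G^\prime}$. The hypothesis then produces $\rho^\prime > 0$ on $G^\prime$ with $\tau_{\rho^\prime} = \br^\prime$.

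Finally I would define $\rho$ on $G$ by $\rho(z) = \rho^\prime(z)$ for $z \neq v, w$, together with $\rho(v) = \rho^\prime(v) \br(v) / (\br(v) + \br(w))$ and $\rho(w) = \rho^\prime(v) \br(w) / (\br(v) + \br(w))$, so that $\rho(v) + \rho(w) = \rho^\prime(v)$. The key identity $\sum_{z \in N_G(u)} \rho(z) = \sum_{z \in N_{G^\prime}(u)} \rho^\prime(z)$, which holds for every vertex $u$, makes the projection $G \to G^\prime$ identifying $w$ with $v$ weight-preserving: for $u \in N(v)$, the probability of transitioning from $u$ to $\{v, w\}$ under $\rho$ equals that of transitioning from $u$ to $v$ under $\rho^\prime$. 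Thus a $\rho$-walk on $G$ projects in distribution to a $\rho^\prime$-walk on $G^\prime$, yielding $\tau_\rho(z) = \br(z)$ for $z \neq v, w$ and $\tau_\rho(v) + \tau_\rho(w) = \br(v) + \br(w)$. Since each visit to $\{v, w\}$ is to $v$ or $w$ in the ratio $\rho(v) : \rho(w) = \br(v) : \br(w)$, the individual expectations come out to $\br(v)$ and $\br(w)$ respectively.

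The main obstacle is the second step: securing $\br^\prime \in \Psi_{G^\prime}$ rather than just in the closure of $\Psi_{G^\prime}$. This requires confirming both the surjectivity of $\pi$ onto the containing hyperplane (via the dimension-preservation argument built on twin symmetry) and invoking the open mapping theorem to preserve the relative interior. The subsequent construction of $\rho$ and verification via walk projection are then natural twin-vertex manipulations analogous to the pendant-vertex argument of the previous theorem.
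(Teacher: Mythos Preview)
Your proposal is correct and follows essentially the same approach as the paper: merge the twins via $\pi$ to pass from $\br \in \Psi_G$ to $\br' \in \Psi_{G'}$ (using the open mapping theorem to preserve the relative interior), invoke the hypothesis to solve on $G'$, and then split the weight $\rho'(v)$ between $v$ and $w$ in the ratio $\br(v):\br(w)$. Your treatment of the interior-preservation step is in fact more careful than the paper's, which invokes the open mapping theorem somewhat tersely and with a slightly garbled direction in the convex-combination argument.
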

\begin{proof}
Define $r^\prime : V(G^\prime) \rightarrow \R$ to be
$$
r^\prime(x) = \left \{ \begin{array}{ll} \br(x) & \textrm{if } x \neq v \\ \br(v)+\br(w) & \textrm{if } x = v . \end{array} \right .
$$
Since $\br^\prime \in \Psi(G^\prime)$, we can write $\br^\prime = \sum_\omega \lambda_\omega \tr_\omega$.  It is easy to see that, if we write $\omega^\prime$ for the walk obtained from $\omega$ by replacing each occurrence of $w$ with $v$, then
$$
\br = \sum_\omega \lambda_\omega \tr_{\omega^\prime}.
$$
Since $v$ and $w$ have identical neighborhoods, $\omega^\prime$ is a bona fide $G$-walk for each $G^\prime$-walk $\omega$.  The open mapping theorem implies that the map $(x_1,\ldots,x_n) \mapsto (x_1,\ldots,x_{n-2},x_{n-1} + x_1)$ (and any map obtained by permuting coordinates) from the minimal containing hyperplane of $\Psi_G$ to its image preserves open sets.  Hence, $\br \in \Psi(G)$, and, by hypothesis, we can solve $\tau_{\rho} = \br^\prime$.

Now, for $\alpha \in [0,1]$, let $\rho_\alpha$ agree with $\rho$ on $G \setminus \{v,w\}$, $\rho_\alpha(v) = \alpha \rho(v)$, and $\rho_\alpha(w) = 1 - \alpha \rho(v)$.  A $\rho_\alpha$-walk visits the set $\{v,w\}$ an expected $\br(v) + \br(w)$ number of times, with each visit going to $v$ with probability $\alpha$ and going to $w$ with probability $1 - \alpha$.  Therefore, the expected number of visits to $v$ is $(\br(v) + \br(w)) \alpha$ and the expected number of visits to $w$ is $(\br(v) + \br(w)) (1- \alpha)$.   We can set $\alpha = \br(v)/(\br(v) + \br(w))$ so that $\tau_{\rho_\alpha} = \br$.
\end{proof}

Dealing with the case of a path would be useful at this point.  In that case, we write the vertices of $G$ in order: $v_1 = \bv_{\textrm{out}}, v_2, \ldots, v_{n-1}, v_n = \bv_{\textrm{in}}$.  Write $\R^+$ for the nonnegative reals and $\R^{++}$ for the positive reals.

\begin{prop} \label{prop:cone} Suppose $G$ is a path.  The set $\Psi_G$ is precisely the set of vectors of the form $\1 + \sum_{j=2}^{n-1} \alpha_j \Bf_j$, where $\Bf_j = \be_j + \be_{j+1}$ and $\alpha_j > 0$ for each $2 \leq j \leq n-1$.
\end{prop}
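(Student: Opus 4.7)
The plan is to parametrize each proper walk $\omega$ on the path by its left-to-right edge-crossing counts, derive a clean formula for $\tr_\omega$ in terms of these, show every nonnegative integer vector of parameters is realized by some walk, and then read off the convex-hull structure in the induced coordinates. Concretely, for a proper walk $\omega$ let $b_j$ (resp.\ $a_j$) be the number of steps of $\omega$ traversing $\{v_j,v_{j+1}\}$ from $v_j$ to $v_{j+1}$ (resp.\ from $v_{j+1}$ to $v_j$). Flow conservation across each such edge, together with $\omega$ starting at $v_n$ and ending at $v_1$, forces $a_j = b_j + 1$. Counting arrivals at each vertex gives $\tr_\omega(v_j) = b_{j-1} + a_j$ for $2 \le j \le n-1$, together with $\tr_\omega(v_1) = 1$ by properness and $\tr_\omega(v_n) = a_{n-1}$ since $v_n$ is the start. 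Substituting $a_j = b_j + 1$ and setting $b_1 := 0$ collapses these into
\[
\tr_\omega = \1 + \sum_{j=2}^{n-1} b_j\, \Bf_j, \qquad b_j \in \Z_{\ge 0}.
\]

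To realize every tuple $(m_2,\ldots,m_{n-1}) \in \Z_{\ge 0}^{n-2}$, I would use a \emph{descend-and-bounce} walk: start at $v_n$, descend one edge at a time, and upon first reaching each $v_j$ with $2 \le j \le n-1$ interleave $m_j$ round-trips $v_j \to v_{j+1} \to v_j$ before continuing on to $v_{j-1}$. A direct check shows this produces $b_j = m_j$ (and $a_j = m_j + 1$) with no unintended contribution to any other edge, hence a proper walk with trace $\1 + \sum_j m_j \Bf_j$. Combining the two previous steps identifies the convex hull of $\{\tr_\omega\}$ with $\1 + \{\sum \alpha_j \Bf_j : \alpha_j \ge 0\}$: one inclusion is immediate from the parametrization, and for the reverse any target $\1 + \sum \alpha_j \Bf_j$ with $\alpha_j \ge 0$ can be written as $\lambda_0 \1 + \sum_{j=2}^{n-1} \lambda_j \tr_{\omega_j^K}$, where $\omega_j^K$ is the walk realizing $b_j = K$ alone, $\lambda_j = \alpha_j/K$, and $\lambda_0 = 1 - \sum_j \lambda_j$, valid for any $K > \sum_j \alpha_j$.

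Finally, the vectors $\{\Bf_j\}_{j=2}^{n-1}$ are linearly independent (staircase pattern), so they furnish linear coordinates on a translate of the $(n-2)$-dimensional minimal containing hyperplane of $\Psi_G$, consistent with the preceding lemma applied to the bipartite graph $G$. In these $\alpha$-coordinates the convex hull is the closed positive orthant, whose relative interior is the open positive orthant, and this is exactly the set described in the statement. The one step that requires any genuine care is verifying that the descend-and-bounce construction produces precisely the prescribed $b_j$'s without spurious contributions to other edges; everything else is routine bookkeeping plus an appeal to the previous lemma.
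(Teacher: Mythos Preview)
Your argument is correct. The overall architecture matches the paper's: both construct the same ``bounce at one edge'' walks to realize $\1 + k\Bf_j$, take convex combinations with the direct path $\1$ to fill out the cone $\1 + \sum_j \R^+ \Bf_j$, and then pass to the relative interior using linear independence of the $\Bf_j$.

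The genuine difference is in how you show every $\tr_\omega$ lies in this cone. The paper argues inductively: any proper walk other than the direct path has a ``last step backwards'' $v_j \to v_{j+1} \to v_j$, and excising it subtracts exactly $\Bf_j$ from the trace; iterating reduces to $\1$. You instead parametrize by the directed edge-crossing counts $(a_j,b_j)$, use flow conservation (constancy of $a_j-b_j$ along the path, pinned to $1$ by properness at $v_1$) to get $a_j=b_j+1$, and read off $\tr_\omega = \1 + \sum_j b_j \Bf_j$ directly. Your route is a bit more systematic and gives the exact integer parametrization of all traces in one stroke, whereas the paper's induction is slightly more elementary and avoids setting up the edge-count bookkeeping. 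Both are short; neither dominates the other. Your worry about the descend-and-bounce construction is unfounded: the bounces at $v_j$ touch only edge $\{v_j,v_{j+1}\}$, so $b_j=m_j$ with no cross-contamination, exactly as you say.
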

\begin{proof} We actually show that the topological closure $\bar{\Psi}$ of $\Psi_G$ is of the form $\1 + \sum_{j=2}^{n-1} \R^+ \Bf_j$.  It is easy to see that the conclusion $\1 + \sum_{j=2}^{n-1} \R^{++} \Bf_j = \Psi_G$ then follows, since non-boundary points $\bx$ can be perturbed by some $\sum_{j=2}^{n-1} \epsilon_j \Bf_j$ for $\epsilon_j > 0$ without leaving the set, implying that the projection of $\bx - \1$ onto each $\Bf_j$ is nonzero.

Let $\eta(j,k)$ denote the walk from $\bv_{\textrm{in}}$ to $\bv_{\textrm{out}}$ of the form
$$
(v_n, v_{n-1}, \ldots, v_{j+2}, \underbrace{v_{j+1}, v_j, \ldots, v_{j+1}, v_j}_{k \, \textrm{times}}, v_{j-1}, \ldots, v_2, v_1),
$$
that is, a direct path with $k$ ``steps back'' at $j$ added, $k \geq 2$ and $2 \leq j \leq n-1$.  (Write $\eta$ for the path with no steps backwards.)  Clearly, $\tr_{\eta(j,k)} = k \Bf_j + \1$.  By taking convex combinations of $\tr_{\eta(j,k)}$ and $\tr_\eta$ for sufficiently large $k$, one can construct any $\alpha \Bf_j + \1$ with $\alpha \geq 0$.  Then, by taking convex combinations of the resulting vectors, the inclusion $\1 + \sum_{j=2}^{n-1} \R^+ \Bf_j \subset \bar{\Psi}$ follows.

For the opposite inclusion, it suffices to show that $\tr_\omega \in \1 + \sum_{j=2}^{n-1} \R^+ \Bf_j$ for each proper walk $\omega$.  We show this inductively: if $\omega = \eta$, the statement evidently holds.  Hence, assume that, for some $t > 0$, $\omega(t+2) = \omega(t) = v_j$ and $\omega(t+1) = v_{j+1}$.  Every proper walk other than $\eta$ admits such a $t$ since, for example, we may take $v_j \rightarrow v_{j+1} \rightarrow v_j$ to be the last step backwards.  Then
$$
\tr_\omega = \tr_{\omega^\prime} + \Bf_j
$$
where $\omega^\prime$ is the proper walk $\omega$ with steps $t+1$ and $t+2$ removed.  Clearly, by iterating this argument, we arrive at a representation of the form
$$
\tr_\omega = \1 + \sum_{j=2}^{n-1} \alpha_j \Bf_{j}
$$
with $\alpha_j \geq 0$.
\end{proof}

We need the following lemma, which allows us to compute expected occupation time vectors as eigenvectors of a certain matrix.

\begin{lemma} \label{lemma:mreqr} Let $n = |V(G)|$, where $G$ is a weighted graph with $\wt(v) = \beta_v$ and distinguished vertices $\bv_{\textrm{in}}$, $\bv_{\textrm{out}}$.  There is a unique nonnegative vector $\br \in \R^{V(G)}$ so that $\br_{\bv_{\textrm{out}}} = 1$, $\|\br\|_1 > 1$, and $M \br = \br$, where $M \in \R^{n\times n}$ is defined by
$$
M_{vw} = \left \{ \begin{array}{l} 1 \textrm{ if } (v,w)=(\bv_{\textrm{out}},\bv_{\textrm{out}}) \textrm{ or } (v,w) = (\bv_{\textrm{in}},\bv_{\textrm{out}}) \\
                                \frac{\beta_v}{\sum_{u \sim w} \beta_u} \textrm{ if } v \sim w \textrm{ and } w \neq \bv_{\textrm{out}}\\
                                0 \textrm{ otherwise.} \end{array} \right .
$$
Furthermore, $\br_v$ is the expected number of visits in a proper random walk on $G$ with weights $\{\beta_v\}_{v \in V(G)}$.
\end{lemma}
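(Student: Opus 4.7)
The approach is to take the natural candidate, namely the vector of expected occupation times in a proper walk, verify that it is a fixed point of $M$ satisfying the listed constraints, and then establish uniqueness via the invertibility of the associated substochastic operator obtained by deleting the row and column indexed by $\bv_{\textrm{out}}$.

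For existence, I would define $\br_v := \bE[\tau(v)]$, the expected number of visits to $v$ in a proper walk on $G$ with weights $\{\beta_v\}_{v \in V(G)}$. Nonnegativity is automatic, $\br_{\bv_{\textrm{out}}} = 1$ by definition of ``proper'', and since the walk starts at $\bv_{\textrm{in}}$ we have $\br_{\bv_{\textrm{in}}} \geq 1$, giving $\|\br\|_1 \geq 2 > 1$. A first-step decomposition---each visit to a transient $v$ is either the initial placement at $\bv_{\textrm{in}}$ or arrives from a transient neighbor $w$ via a single step with probability $\beta_v/\sum_{u \sim w}\beta_u$---yields
\[
\br_v = \chi(v = \bv_{\textrm{in}}) + \sum_{w \sim v,\, w \neq \bv_{\textrm{out}}} \frac{\beta_v}{\sum_{u \sim w} \beta_u} \, \br_w \qquad (v \neq \bv_{\textrm{out}}).
\]
Rewriting the indicator as $\chi(v = \bv_{\textrm{in}}) = M_{v,\bv_{\textrm{out}}}\br_{\bv_{\textrm{out}}}$ (using $\br_{\bv_{\textrm{out}}} = 1$) identifies the right-hand side as $(M\br)_v$, while the $\bv_{\textrm{out}}$-row of $M$ enforces only the trivial identity $\br_{\bv_{\textrm{out}}} = \br_{\bv_{\textrm{out}}}$. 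Thus $M\br = \br$.

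For uniqueness, I would condense the system to the coordinates indexed by $V \setminus \{\bv_{\textrm{out}}\}$. Writing $\br = (1, \br')$ and letting $M'$ denote the principal submatrix of $M$ on $V \setminus \{\bv_{\textrm{out}}\}$, the fixed-point equation together with $\br_{\bv_{\textrm{out}}} = 1$ becomes $(I - M')\br' = \be_{\bv_{\textrm{in}}}$. Here $M'$ is the transpose of the transition matrix restricted to the transient states. Under the paper's standing reduction that $G \setminus \bv_{\textrm{out}}$ is connected, every transient vertex has a positive-probability path to $\bv_{\textrm{out}}$, whence the walk is absorbed almost surely, so $(M')^n \to 0$ and the spectral radius of $M'$ is strictly less than $1$. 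Therefore $I - M'$ is invertible and $\br' = (I - M')^{-1} \be_{\bv_{\textrm{in}}}$ is uniquely determined, giving uniqueness of $\br$.

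The main obstacle is establishing the spectral bound on $M'$: one needs the graph-theoretic hypothesis guaranteeing almost-sure absorption at $\bv_{\textrm{out}}$, after which the remainder is elementary linear algebra and standard first-step analysis. The condition $\|\br\|_1 > 1$ is there to exclude the degenerate case $\bv_{\textrm{in}} = \bv_{\textrm{out}}$ (where the zero-length walk would give $\br = \be_{\bv_{\textrm{out}}}$ with $\|\br\|_1 = 1$), and is otherwise automatic from $\br_{\bv_{\textrm{in}}} \geq 1$.
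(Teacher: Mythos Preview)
Your argument is correct. For existence you and the paper do the same thing: verify by a first-step decomposition that the occupation-time vector satisfies $M\br=\br$, handling the $\bv_{\textrm{in}}$-coordinate via the extra $1$ coming from $M_{\bv_{\textrm{in}},\bv_{\textrm{out}}}\br_{\bv_{\textrm{out}}}$. For uniqueness, however, the approaches diverge. The paper keeps the full $n\times n$ matrix, observes that the digraph of $M$ is $G$ with the out-edges of $\bv_{\textrm{out}}$ replaced by a single arc $\bv_{\textrm{out}}\to\bv_{\textrm{in}}$, so that $M^n$ is positive off the first column; it then assumes two distinct normalized fixed points $\br,\br'$, replaces the first row of $M^n$ by a suitable positive vector to get a strictly positive matrix $M'$ fixing $\br-\br'$, and invokes Perron--Frobenius to force $\br-\br'$ to be a positive vector, contradicting $(\br-\br')_{\bv_{\textrm{out}}}=0$. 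Your route is the standard absorbing-chain argument: delete the $\bv_{\textrm{out}}$ row and column, recognize the resulting block $M'$ as the transpose of the substochastic transition matrix on transient states, and use almost-sure absorption (which follows from the connectivity hypothesis on $G\setminus\bv_{\textrm{out}}$) to get spectral radius $<1$ and hence invertibility of $I-M'$. Your approach is shorter, avoids the somewhat delicate row-replacement trick, and identifies $\br'=(I-M')^{-1}\be_{\bv_{\textrm{in}}}$ explicitly as the fundamental-matrix expression for occupation times; the paper's approach has the minor virtue of not needing to isolate the substochastic block, but at the cost of a more roundabout contradiction.
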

\begin{proof} Let $\Gamma$ be the weighted digraph whose adjacency matrix is $M$.  Then $\Gamma$ consists of $G$ with each edge incident to $\bv_{\textrm{out}}$ removed, plus a single directed edge from ${\bv_{\textrm{out}}}$ to ${\bv_{\textrm{in}}}$.  In particular, $M^n$ has all positive entries, except for the nondiagonal elements of its first column (which are $0$).  Suppose $\br_{\bv_{\textrm{out}}} = \br^\prime_{\bv_{\textrm{out}}} = 1$, $\|\br\| > 1$, $\|\br^\prime\| > 1$, $M \br = \br$, $M \br^\prime = \br^\prime$, but $\br \neq \br^\prime$.  Then $M^n (\br-\br^\prime) = \br - \br^\prime$ as well.  Note that there exists some strictly positive vector $\bu$ so that $\bu \cdot \br = \bu \cdot \br^\prime = 1$: simply choose a positive vector orthogonal to $\br - \br^\prime$ and scale it so that its dot product with $\br$ is $1$.  (The vector $\br - \br^\prime$ has positive and negative entries since $\br$ and $\br^\prime$ each have at least two positive entries, and they are not the same vector.)  If we replace the first row of $M^n$ with the vector $\bu$, obtaining a new matrix $M^\prime$, then $M^\prime (\br - \br^\prime) = \br - \br^\prime$.  The Perron-Frobenius Theorem implies that $\br$ has all positive entries.  However, its first coordinate is $0$, a contradiction unless $\br = \br^\prime$, which is also a contradiction.  Hence, the solution to  $M \br = \br$ is unique.

We therefore need only show that the vector $\br$ of expected number of visits satisfies $M\br = \br$, since $\br_{\bv_{\textrm{out}}} = 1$ and $\br$ has additional nonzero entries.  It is clear that $(M \br)_{\bv_{\textrm{out}}} = 1 = \br_{\bv_{\textrm{out}}}$.  Since $\br_w$ is the $\beta_v/\sum_{u \sim w} \beta_u$-weighted sum of $\br_v$ for $v \sim w$, where $v \neq \bv_{\textrm{in}}, \bv_{\textrm{out}}$, the claim also holds for these $v$'s.  As for $\br_{\bv_{\textrm{in}}}$, the expected number of visits to $\bv_{\textrm{in}}$ is the weighted sum of its neighbors' expected number of visits, plus $1$, since the first visit to $\br_{\bv_{\textrm{in}}}$ is not preceded by a visit to any other vertex.  However, this extra ``$1$'' comes from the $(v,w) = (\bv_{\textrm{in}},\bv_{\textrm{out}})$ term in $M$, because $\br_{\bv_{\textrm{out}}}=1$.
\end{proof}

\begin{theorem} For $G$ a path, $\tau_\rho = \br$ is solvable iff $\br \in \Psi_G$.
\end{theorem}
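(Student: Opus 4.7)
The necessity direction is immediate from the representation $\tau_\rho = \sum_{\text{proper } \omega} \tr_\omega \bP(\omega)$: any positive $\rho$ assigns strictly positive probability to every proper walk, so $\tau_\rho$ lies in the relative interior of the convex hull of the $\tr_\omega$. For sufficiency, my strategy is to build $\rho$ explicitly from the decomposition guaranteed by Proposition \ref{prop:cone} and verify $\tau_\rho = \br$ via Lemma \ref{lemma:mreqr}, for which it suffices to exhibit a positive $\rho$ such that the matrix $M$ built from $\beta_v = \rho(v)$ satisfies $M \br = \br$ together with $\br_{\bv_{\textrm{out}}} = 1$ and $\|\br\|_1 > 1$.

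Given $\br \in \Psi_G$, Proposition \ref{prop:cone} allows me to write $\br = \1 + \sum_{j=2}^{n-1} \alpha_j (\be_j + \be_{j+1})$ with all $\alpha_j > 0$, so that (with the convention $\alpha_1 = \alpha_n = 0$) $\br_i = 1 + \alpha_{i-1} + \alpha_i$. The construction of $\rho$ is then guided by flow conservation on the path: a proper walk must traverse the edge $\{v_i, v_{i+1}\}$ an expected $\alpha_i$ times rightward and $\alpha_i + 1$ times leftward, and the ratio of departure probabilities at $v_i$ forces
$$
\rho(v_{i+1}) = \rho(v_{i-1}) \cdot \frac{\alpha_i}{\alpha_{i-1} + 1} \qquad (2 \le i \le n-1).
$$
Setting $\rho(v_1) = 1$ and choosing any $\rho(v_2) > 0$, the recursion produces positive values for every $\rho(v_i)$ since $\alpha_j > 0$ for $j \ge 2$. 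The single free parameter $\rho(v_2)$ matches the bipartiteness-induced dimension gap: $\dim \Psi_G = n-2$, $\rho$ has $n-1$ unknowns after normalization, and uniformly rescaling the even-indexed weights preserves every transition probability, so one degree of freedom on the $\rho$-side is expected.

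The remaining task is to verify $M \br = \br$ row by row. The row at $v_1 = \bv_{\textrm{out}}$ is trivial, and for a generic interior row $v_i$ with $3 \le i \le n-1$ the recursion reduces each coefficient appearing in $(M \br)_{v_i}$ to a clean form,
$$
\frac{\rho(v_i)}{\rho(v_{i-2}) + \rho(v_i)} = \frac{\alpha_{i-1}}{\br_{i-1}}, \qquad \frac{\rho(v_i)}{\rho(v_i) + \rho(v_{i+2})} = \frac{\alpha_i + 1}{\br_{i+1}},
$$
so the two contributions telescope to $\alpha_{i-1} + \alpha_i + 1 = \br_i$. The two boundary rows, namely $v_2$ (whose $\bv_{\textrm{out}}$-neighbor is excluded from the $M$-sum) and $v_n = \bv_{\textrm{in}}$ (whose row carries the extra $\br_{\bv_{\textrm{out}}} = 1$ coming from $M_{\bv_{\textrm{in}}, \bv_{\textrm{out}}}$), are handled by analogous but shorter computations. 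The main obstacle is not conceptual but organizational: one must carefully handle the boundary rows and the endpoint $i = n-1$ (where $v_{i+2}$ does not exist, so $\sum_{u \sim v_{i+1}} \rho(u)$ degenerates to $\rho(v_i)$) so that the clean telescoping identity also covers them. Once this is done, Lemma \ref{lemma:mreqr} yields $\tau_\rho = \br$, completing the proof.
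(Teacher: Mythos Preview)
Your proposal is correct and follows essentially the same route as the paper: you use Proposition~\ref{prop:cone} to write $\br$ in terms of the $\alpha_j$, define the same weights (your recursion $\rho(v_{i+1}) = \rho(v_{i-1})\,\alpha_i/(\alpha_{i-1}+1)$ unwinds exactly to the paper's closed-form product for $\rho(v_j)$, with the paper fixing the free parameter as $\rho(v_2)=1$), and verify $M\br=\br$ via the identical identities $p_j=\alpha_{j-1}/\br_{j-1}$ and $q_j=(1+\alpha_j)/\br_{j+1}$, handling the rows $j=2,\,n-1,\,n$ separately. The only differences are stylistic: you supply the flow-conservation intuition and note the bipartite degree of freedom, whereas the paper writes out every boundary computation in full.
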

\begin{proof} By Proposition \ref{prop:cone}, we may assume that $\br = \1 + \sum_{j=2}^{n-1} \alpha_j \Bf_j$ with $\alpha_j > 0$ for all $j$, $2 \leq j \leq n-1$.  Let $\rho(v_1) = 1$, $\rho(v_2) = 1$, and, for $j > 2$,
$$
\rho(v_j) = \frac{\prod_{k=0}^{\floor{(j-3)/2}} \alpha_{j-2k-1}}{\prod_{k=0}^{\floor{(j-4)/2}} (1+\alpha_{j-2k-2})}.
$$
where we interpret an empty product as $1$.  Let $\beta_j = \rho(v_j)$.  To see that $\tau_\rho = \br$, we need to show that $M \br = \br$, where $M$ is the matrix given by
$$
\left [ \begin{array}{cccccccccc}
1 & 0 & 0 & 0 & 0 & \cdots & 0 & 0 & 0 & 0 \\
0 & 0 & \frac{\beta_2}{\beta_2 + \beta_4} & 0 & 0 & \cdots & 0 & 0 & 0 & 0 \\
0 & \frac{\beta_3}{\beta_1 + \beta_3} & 0 & \frac{\beta_3}{\beta_3 + \beta_5} & 0 & \cdots & 0 & 0 & 0 & 0 \\
0 & 0 & \frac{\beta_4}{\beta_2 + \beta_4} & 0 & \frac{\beta_4}{\beta_4 + \beta_6} & \cdots & 0 & 0 & 0 & 0 \\
\vdots & \vdots & \vdots & \vdots & \vdots & \ddots & \vdots & \vdots & \vdots & \vdots \\
0 & 0 & 0 & 0 & 0 & \cdots & \frac{\beta_{n-2}}{\beta_{n-4}+\beta_{n-2}} & 0 & \frac{\beta_{n-2}}{\beta_{n-2} + \beta_n} & 0 \\
0 & 0 & 0 & 0 & 0 & \cdots & 0 & \frac{\beta_{n-1}}{\beta_{n-3}+\beta_{n-1}} & 0 & 1 \\
1 & 0 & 0 & 0 & 0 & \cdots & 0 & 0 & \frac{\beta_{n}}{\beta_{n-2}+\beta_{n}} & 0
\end{array} \right ]
$$
This will suffice to provide the result, since by Lemma \ref{lemma:mreqr}, $\br$ is the unique solution to $M \br = \br$ with $\br(b)=1$.

Recall that
$$
\br = \left [ \begin{array}{c} 1 \\ 1 + \alpha_2 \\ 1 + \alpha_2 + \alpha_3 \\ \vdots \\ 1 + \alpha_{n-2} + \alpha_{n-1} \\ 1 + \alpha_{n-1}
\end{array} \right ].
$$
Hence, the first coordinate of $M \br$ is $1 = \br(1)$.  The second coordinate of $M \br$ is
\begin{align*}
\frac{\beta_2}{\beta_2 + \beta_4} \br(3) &= \frac{1}{1 + \frac{\alpha_3}{1 + \alpha_2}} (1 + \alpha_2 + \alpha_3) \\
& = \frac{1 + \alpha_2}{1 + \alpha_2 + \alpha_3} (1 + \alpha_2 + \alpha_3) = 1 + \alpha_2 = \br(2).
\end{align*}
Let $p_j = \beta_j/(\beta_{j-2} + \beta_j)$ and $q_j = \beta_j/(\beta_j + \beta_{j+2}) = 1 - p_{j+2}$.  Then
\begin{align*}
p_j &= \frac{\beta_j}{\beta_{j-2} + \beta_j} = \left ( 1 + \beta_{j-2}/\beta_j \right )^{-1}\\
&= \left ( 1 + \frac{\prod_{k=0}^{\floor{(j-5)/2}} \alpha_{j-2k-3}}{\prod_{k=0}^{\floor{(j-6)/2}} (1+\alpha_{j-2k-4})} \cdot \frac{\prod_{k=0}^{\floor{(j-4)/2}} (1+\alpha_{j-2k-2})}{\prod_{k=0}^{\floor{(j-3)/2}} \alpha_{j-2k-1}} \right )^{-1} \\
&= \left ( 1 + \frac{1+\alpha_{j-2}}{\alpha_{j-1}} \right )^{-1} = \frac{\alpha_{j-1}}{1 + \alpha_{j-2} + \alpha_{j-1}},
\end{align*}
and
$$
q_j = 1 - p_{j+2} = \frac{1 + \alpha_{j}}{1 + \alpha_{j} + \alpha_{j+1}}.
$$
Then, for $3 \leq j \leq n-2$, the $j^\textrm{th}$ coordinate of $M \br$ is given by
\begin{align*}
p_j \br(j-1) + q_j \br(j+1) &= \frac{\alpha_{j-1}}{1 + \alpha_{j-2} + \alpha_{j-1}} (1 + \alpha_{j-2} + \alpha_{j-1}) \\
& \qquad + \frac{1 + \alpha_{j}}{1 + \alpha_{j} + \alpha_{j+1}} (1 + \alpha_j + \alpha_{j+1}) \\
& = 1 + \alpha_{j-1} + \alpha_j = \br(j).
\end{align*}
The $(n-1)^\textrm{st}$ coordinate of $M \br$ is
\begin{align*}
p_{n-1} \br(n-2) + \br(n) &= \frac{\alpha_{n-2}}{1 + \alpha_{n-3} + \alpha_{n-2}} (1 + \alpha_{n-3} + \alpha_{n-2}) + (1 + \alpha_{n-1}) \\
&= 1 + \alpha_{n-2} + \alpha_{n-1} = \br(n-1),
\end{align*}
and the $n^\textrm{th}$ coordinate of $M \br$ is
\begin{align*}
1 + p_{n} \br(n-1) &= 1 + \frac{\alpha_{n-1}}{1 + \alpha_{n-2} + \alpha_{n-1}} (1 + \alpha_{n-2} + \alpha_{n-1})\\
&= 1 + \alpha_{n-1} = \br(n).
\end{align*}

\end{proof}

\begin{theorem} For $G=K_n$ a complete graph with vertices $v_1=\bv_{\textrm{out}},v_2=\bv_{\textrm{in}},v_3,\ldots,v_n$, $\tau_\rho = \br$ is solvable iff $\br \in \Psi_G$.
\end{theorem}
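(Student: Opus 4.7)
The plan is to follow the architecture of the path-case proof: apply Lemma \ref{lemma:mreqr} to $K_n$ to reduce solvability to a polynomial system, and then establish existence via an intermediate value argument under the hypothesis $\br \in \Psi_{K_n}$. Since in $K_n$ every vertex is adjacent to every other, $\sum_{u \sim w}\beta_u = S - \beta_w$ where $S = \sum_k \beta_k$. Normalizing $\beta_1 = 1$ (without loss of generality by scaling), the system $M\br = \br$ simplifies to $r_j = \beta_j(S-\beta_j)/S$ for $j \geq 3$, $r_2 = (S-\beta_2)(1+\beta_2)/S$, and the normalization $S = 1 + \sum_{j \geq 2}\beta_j$.

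Next, I would parameterize by the single variable $u := \sum_{j \geq 3}\beta_j$. The $r_2$-equation rearranges to $S(u) = u(u+1)/(u+1-r_2)$ (requiring $u > r_2 - 1$) and $\beta_2 = S(u) - 1 - u$. Each $\beta_j$ with $j \geq 3$ is then a root of $x^2 - S(u)\, x + S(u)\, r_j = 0$, giving two options $\beta_j^\pm(u) = (S(u) \pm \sqrt{S(u)(S(u) - 4 r_j)})/2$, real precisely when $S(u) \geq 4 r_j$. The reduced problem is: find $u$ in the valid range together with a sign pattern $\sigma \in \{\pm\}^{n-2}$ such that $\sum_{j \geq 3}\beta_j^{\sigma_j}(u) = u$, with all weights positive.

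The existence step is then an application of the intermediate value theorem to continuous branches of $T_\sigma(u) := \sum_j \beta_j^{\sigma_j}(u) - u$. The boundary asymptotics are explicit: as $u \to (r_2-1)^+$, $S(u) \to \infty$ and the small roots satisfy $\beta_j^-(u) \to r_j$; as $u \to \infty$, $S(u) \sim u + r_2$. Distinct sign-pattern branches glue continuously at the degeneracies $S(u) = 4r_j$ (where $\beta_j^+ = \beta_j^- = S(u)/2$), producing a piecewise-continuous curve over the admissible $u$-range. Under $\br \in \Psi_{K_n}$, for an appropriate choice of $\sigma$ the value $T_\sigma(u)$ takes opposite signs at the two ends of its domain, and IVT then yields a zero. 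The case $K_3$ already exhibits this: for interior $\br = (1, r_2, r_3)$ the unique solution $u = r_3/(r_2 - r_3)$ lies on the small-root branch when $r_2 - r_3 > 1/2$ and on the large-root branch when $r_2 - r_3 < 1/2$, transitioning at $r_2 - r_3 = 1/2$ where $\Delta_3 = 0$.

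The hard part will be the case analysis for sign patterns: for each $\br$ in the interior of $\Psi_{K_n}$, the correct $\sigma$ depends on the fine structure of $\br$, and one must verify that the union of solvable regions across all choices of $\sigma$ coincides with that interior. This likely demands a more explicit description of $\Psi_{K_n}$ by facet inequalities (analogous to Proposition \ref{prop:cone} for paths, but more intricate here since the recession cone of $\Psi_{K_n}$ is generated by the $\binom{n-1}{2}$ insert directions $\be_a + \be_b$ for $a, b \neq \bv_{\textrm{out}}$, $a \neq b$). Positivity of the constructed weights is automatic from the quadratic root formulas together with the positivity conditions defining $\Psi_{K_n}$, and the converse (necessity) follows, as usual, from the fact that $\tau_\rho$ is a positive convex combination of the traces of proper walks and hence lies in the relative interior of the convex hull.
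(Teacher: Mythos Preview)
Your architecture is the same as the paper's---reduce to the explicit system via Lemma~\ref{lemma:mreqr}, invert the quadratic for each $\beta_j$, and apply the Intermediate Value Theorem to a one-parameter family---but your execution diverges in two ways, one of which creates the ``hard part'' you flag.

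First, the parameterization. The paper normalizes $\sum_k \beta_k = 1$ (rather than $\beta_1 = 1$) and uses $\beta_1$ itself as the free parameter. It then treats $\br_2$ as the \emph{dependent} variable: fixing $\br_3,\ldots,\br_n$, one writes $\br_2 = \br_2(\beta_1)$ explicitly and shows it sweeps out an interval as $\beta_1$ ranges over $(0,(4\br_3)^{-1}]$ (where $\br_3 = \max_{j\geq 3}\br_j$). The endpoint limits are computed directly by L'H\^opital, yielding the range $\bigl(\br_3 - \sum_{j>3}\br_j,\; 1 + \sum_{j>2}\br_j\bigr)$. Your inversion---fixing $\br_2$ and seeking $u$---is equivalent in principle but makes the endpoint analysis messier.

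Second, and more importantly, the sign-pattern ``case analysis'' you fear is essentially vacuous. Since $\sum_k \beta_k = 1$ (or $= S$ in your normalization), at most \emph{one} $\beta_j$ can exceed $1/2$ (resp.~$S/2$), so at most one large root can appear; moreover, the paper observes that if $\beta_j > 1/2$ then $\br_j = \max_{i\geq 3}\br_i$, so the large root, if any, is forced to sit at the index of the maximal $\br_j$. Hence there are exactly two branches to glue, not $2^{n-2}$, and they meet continuously at $\beta_1 = (4\br_3)^{-1}$. This is the key simplification you are missing.

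Finally, the paper does not need a facet description of $\Psi_{K_n}$. It simply checks that the two interval bounds above hold for every $\br \in \Psi_{K_n}$ by a one-line combinatorial argument: every non-initial visit to $v_2$ is preceded by a visit to some $v_j$ with $j>2$ (upper bound), and every visit to $v_3$ is preceded by a visit to some $v_j$ with $j\neq 1,3$ (lower bound). With that, IVT finishes the job.
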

\begin{proof}
First we give a description of the solvable $\br$'s.  In order to simplify our calculations, we will assume (without loss of generality) that the weights $\beta_1,\ldots,\beta_n$ sum to $1$.  Therefore, define $M$ to be
$$
\left [ \begin{array}{cccccc}
1 & 0 & 0 & \cdots & 0 & 0 \\
1 & 0 & \frac{\beta_2}{1-\beta_3} & \cdots & \frac{\beta_2}{1-\beta_{n-1}} & \frac{\beta_2}{1-\beta_n} \\
0 & \frac{\beta_3}{1-\beta_2} & 0 & \cdots & \frac{\beta_3}{1-\beta_{n-1}} & \frac{\beta_3}{1-\beta_n} \\
\vdots & \vdots & \vdots & \ddots & \vdots & \vdots \\
0 & \frac{\beta_{n-1}}{1-\beta_2} & \frac{\beta_{n-1}}{1-\beta_3} & \cdots & 0 & \frac{\beta_{n-1}}{1-\beta_n} \\
0 & \frac{\beta_n}{1-\beta_2} & \frac{\beta_n}{1-\beta_3} & \cdots & \frac{\beta_n}{1-\beta_{n-1}} & 0 \\
\end{array} \right ]
$$
so that $M \br = \br$ by Lemma \ref{lemma:mreqr}.  Note that the lower-right $(n-1)\times(n-1)$ submatrix of $M$ is simply
$$
\left [ \beta_2 \cdots \beta_n \right ] (J-I) \left [ \begin{array}{c} (1-\beta_2)^{-1} \\ \vdots \\ (1-\beta_n)^{-1} \end{array} \right ],
$$
where $J \in \R^{(n-1)\times(n-1)}$ is the all ones matrix, and $I$ is the identity.  We show that the following is a solution to $M \br = \br$ (and therefore the unique one with $\br_1 = 1$):
$$
\br_j = \left \{ \begin{array}{ll} \beta_j (1-\beta_j)/\beta_1 & \textrm{ if } j \neq 1,2 \\
                                   1                           & \textrm{ if } j = 1 \\
                                   (1 + \beta_2/\beta_1)(1-\beta_2) & \textrm{ if } j=2.
                 \end{array} \right .
$$
It is clear that $(M\br)_1 = 1 = \br_1$.  If $j \neq 1,2$, then
\begin{align*}
(M \br)_j &= \frac{\beta_j (1 + \beta_2/\beta_1)(1-\beta_2)}{1-\beta_2} + \sum_{\substack{i = 3\\i \neq j}}^n \frac{\beta_i(1-\beta_i)}{\beta_1} \cdot \frac{\beta_j}{1-\beta_i} \\
&= \frac{\beta_j (\beta_1 + \beta_2)}{\beta_1} + \sum_{\substack{i = 3\\i \neq j}}^n \frac{\beta_i \beta_j}{\beta_1} \\
&= \frac{\beta_j}{\beta_1} \left ( \beta_1 + \beta_2 + (1-\beta_1-\beta_2-\beta_j) \right ) = \frac{\beta_j (1-\beta_j)}{1-\beta_1} = \br_j.
\end{align*}
It remains to check $\br_2$:
\begin{align*}
(M \br)_2 &= 1 + \sum_{i = 3}^n \frac{\beta_i(1-\beta_i)}{\beta_1} \cdot \frac{\beta_2}{1-\beta_i} \\
&= 1 + \frac{(1-\beta_1-\beta_2) \beta_2}{\beta_1} = \frac{(\beta_1+\beta_2)(1-\beta_2)}{\beta_1} = \br_2.
\end{align*}
Now, if $\br_j = \beta_j (1-\beta_j)/\beta_1$ for each $j\geq 3$, then
$$
\beta_j = \frac{1 \pm \sqrt{1 - 4 \br_j \beta_1}}{2},
$$
with $\pm$ interpreted to be addition if $\beta_j > 1/2$ and subtraction otherwise.  Note that at most one of the $\beta_j$ can exceed $1/2$ since $\sum_{j=1}^n \beta_j = 1$.

Suppose for the moment that $\beta_j \leq 1/2$ for $j \geq 3$.  Then we can write $\beta_2 = 1 - \sum_{j \neq 2} \beta_j$, whence
\begin{align*}
\br_2 &= \frac{(\beta_1 + \beta_2)(1-\beta_2)}{\beta_1}\\
&= \frac{(1 - \sum_{j \neq 1,2} \beta_j)\sum_{j \neq 2} \beta_j}{\beta_1}\\
&= \frac{(2 - \sum_{j \neq 1,2} (1 - \sqrt{1 - 4 \br_j \beta_1}))(2 \beta_1 + \sum_{j \neq 1,2} (1 - \sqrt{1 - 4 \br_j\beta_1}))}{4 \beta_1}.
\end{align*}
This expression is defined for $\beta_1 \in (0,\min_{j \geq 3}{(4\br_j)^{-1}}]$.  We will assume for convenience that $\br_3 = \max_{j \geq 3} \br_j$, so $0 < \beta_1 \leq \br_3^{-1}/4$.  Let $u_j = 1 - \sqrt{1 - 4 \br_j \beta_1}$.  Then, when $\beta_1 \rightarrow 0^+$, we have
\begin{align*}
\lim_{\beta_1 \rightarrow 0^+} \br_2 &= \lim_{\beta_1 \rightarrow 0^+} \frac{(2 - \sum_{j \neq 1,2} u_j) (2 \beta_1 + \sum_{j \neq 1,2} u_j)}{4 \beta_1} \\
&= \frac{1}{4} \left [ -(2\beta_1 + \sum_{j \neq 1,2} u_j) \sum_{j \neq 1,2} \frac{du_j}{d\beta_1} + (2 - \sum_{j \neq 1,2} u_j) (2 + \sum_{j \neq 1,2} \frac{du_j}{d\beta_1})  \right ]_{\beta_1 = 0},
\end{align*}
where $u_j = 1 - \sqrt{1 - 4 \br_j \beta_1}$, by L'H\^{o}pitals' Rule.  Since $u_j|_{\beta_1=0} = 0$ and
$$
\left . \frac{du_j }{d\beta_1} \right |_{\beta_1 = 0} = \left . \frac{2\br_j}{\sqrt{1-4\br_j \beta_1}} \right |_{\beta_1 = 0}= 2\br_j,
$$
we have
$$
\lim_{\beta_1 \rightarrow 0^+} \br_2 = \frac{1}{4} (0 + 2 \cdot (2 + \sum_{j \neq 1,2} 2\br_j)) = 1 + \sum_{j \neq 1,2} \br_j.
$$
On the other hand, if $\beta_1 = (4\br_3)^{-1}$, then
\begin{equation} \label{equpperbeta1}
\br_2 = (2 - \sum_{j > 3} (1 - \sqrt{1 - \br_j / \br_3}) \left (\frac{1}{2} + \br_3 + \br_3 \sum_{j >3} (1 - \sqrt{1 - \br_j/\br_3} ) \right).
\end{equation}

Now, if $\beta_j > 1/2$ for some $j > 2$, we may assume without loss of generality that $j=3$.  Note that
$$
\br_j = \beta_j (1-\beta_j)/\beta_1 \geq \alpha(1-\alpha)/\beta_1
$$
for all $\alpha \leq 1- \beta_j$.  But $\beta_i \leq 1- \beta_j$ for all $i \neq j$, so $r_j \geq r_i$ for all $i \neq 1,2,j$.  Then, we have again that $\br_3 = \max_{j \geq 3} \br_j$.  Letting $u_j$ be as above for $j > 3$ and $u_3 = 1 + \sqrt{1 - 4 \br_3 \beta_1}$, we have
\begin{align*}
\br_2 &= \frac{(\beta_1 + \beta_2)(1-\beta_2)}{\beta_1}\\
&= \frac{(1 - \sum_{j \neq 1,2} \beta_j)\sum_{j \neq 2} \beta_j}{\beta_1}\\
&= \frac{(2 - \sum_{j \geq 3} u_j)(2 \beta_1 + \sum_{j \geq 3} u_j)}{4 \beta_1}.
\end{align*}
This expression is again defined for any $\beta_1 \in (0,{(4\br_3)^{-1}}]$.  The above expression agrees with (\ref{equpperbeta1}) when $\beta_1 = (4\br_3)^{-1}$, since then $u_3 = 0$.  On the other hand, when $\beta_1 \rightarrow 0^+$,
\begin{align*}
\lim_{\beta_1 \rightarrow 0^+} \br_2 &= \lim_{\beta_1 \rightarrow 0^+} \frac{(2 - \sum_{j \geq 3} u_j)(2 \beta_1 + \sum_{j \geq 3} u_j)}{4 \beta_1} \\
&= \frac{1}{4} \left [ -(2\beta_1 + \sum_{j \neq 1,2} u_j) \sum_{j \neq 1,2} \frac{du_j}{d\beta_1} + (2 - \sum_{j \neq 1,2} u_j) (2 + \sum_{j \neq 1,2} \frac{du_j}{d\beta_1})  \right ]_{\beta_1 = 0}\\
&= -\frac{1}{2} \left . \sum_{j \neq 1,2} \frac{du_j}{d\beta_1} \right |_{\beta_1 = 0},
\end{align*}
since $u_j = 0$ when $\beta_1 = 0$ except for $u_3$, which is $2$.  Now, $\frac{du_j}{d\beta_1} = 2r_j$ for $j > 3$, but $\frac{du_3}{d\beta_1} = -2r_3$.  Hence,
$$
\lim_{\beta_1 \rightarrow 0^+} \br_2 = \br_3 - \sum_{j > 3} \br_j.
$$
We may conclude, by the Intermediate Value Theorem, that $\br > 0$ is solvable as long as $\br_1 = 1$ and
$$
\br_3 - \sum_{j > 3} \br_j < \br_2 < 1 + \sum_{j > 2} \br_j.
$$
We claim that this inequality holds for all elements of $\Psi_G$.  To see the upper inequality, consider the fact that each visit (after the first) to $v_2$ of a proper walk is preceded by a visit to some $v_j$ with $j > 2$.  Hence $\br_2$ is at most one more than $\sum_{j > 2} \br_j$.  To see the lower inequality, we write it thusly:
$$
\br_3 \leq \sum_{j \neq 1,3} \br_j.
$$
Again, every visit to $v_3$ in a proper walk is preceded by a visit to some $v_j$ with $j \neq 1,3$.  The inequality, and the theorem, follows.
\end{proof}

\section{Open problems}

The following are unsolved problems that have arisen in the current study and which we would like to see addressed.

\begin{enumerate}
\item Conjecture \ref{mainconjecture}: For which $\br$ is it possible to solve for the weights in the equation $\tau_\rho = \br$?
\item Is it true that the iterated numerical solution described above always yields the correct answer, assuming a solution exists?  To put it another way, is there a unique local minimizer of $\|\tau_\rho - \br\|_2^2$ for a given $\br$?
\item If more information is available about the routes that random walkers take than just the empirical mean occupation times, could one exploit this to more efficiently obtain the weights, or to obtain a ``better'' set of weights?
\item Suppose some measure of expertise is used after the weights are obtained.  For example, one might ask for the correlation coefficient between the weights and the distance function $f : V(G) \rightarrow \N$ given by $f(v) = d(v,\bv_{\textrm{out}})$.  How well does this scheme classify novices and experts?
\item How well does the method-of-moments estimator we introduce above perform, in terms of bias or mean-squared error, for example?
\end{enumerate}

\section{Acknowledgments}

Thank you to David Feldon for valuable discussions and for introducing the author to cognitive task analysis.  


\begin{thebibliography}{100}
\bibitem{AF} D.~ Aldous and J.~Fill, Reversible Markov Chains and Random Walks on Graphs. Preprint (2006). {\tt http://www.stat.berkeley.edu/$\sim$aldous/RWG/book.html}.
\bibitem{C} F.~R.~K.~Chung, Spectral Graph Theory, CBMS Lecture Notes, 1997, AMS Publication.
\bibitem{CY} F.~Chung,  S.-T.~Yau, Discrete Green's functions. In memory of Gian-Carlo Rota.  J.\@ Combin.\@ Theory Ser.\@ A 91 (2000),  no.\@ 1-2, 191--214.
\bibitem{CHH} J.~O.~Cooper, T.~E.~Heron, W.~L.~Heward, Applied Behavior Analysis. Prentice Hall, 2007.
\bibitem{FC}  D.~F.~Feldon \& R.~E.~Clark, Instructional implications of cognitive task analysis as a method for improving the accuracy of experts' self-report. In G. Clarebout \& J. Elen (Eds.), {\it Avoiding simplicity, confronting complexity: Advances in studying and designing (computer-based) powerful learning environments} (pp.~109--116). Rotterdam, Netherlands: Sense Publishers, 2006.
\bibitem{HJ} R.~A.~Horn, C.~R.~Johnson, Matrix analysis.  Cambridge University Press, Cambridge, 1990.
\bibitem{L} G.~F.~Lawler, Introduction to stochastic processes. Chapman \& Hall/CRC, Boca Raton, FL, 2006.
\bibitem{P} M.~L.~Puterman, Markov Decision Processes. Wiley, 1994.
\bibitem{S} R.~S.~Sutton and A.~G.~Barto, Reinforcement Learning: An Introduction.  The MIT Press, Cambridge, Massachusetts, 1998.
\end{thebibliography}
\end{document}